\DeclareMathAlphabet{\mathpzc}{OT1}{pzc}{m}{it}
\title{Crossed $S$-matrices and Character Sheaves on Unipotent Groups}
\author{Tanmay Deshpande}
\date{}
\newtheorem {thm} {Theorem} [section]
\newtheorem {prop} [thm] {Proposition}
\newtheorem {conj} [thm] {Conjecture}
\newtheorem {lem} [thm] {Lemma}
\newtheorem {cor} [thm] {Corollary}
\theoremstyle{definition}
\newtheorem {defn} [thm] {Definition}
\newtheorem {prob} [thm]  {Problem}
\newtheorem {rk} [thm]  {Remark}
\newtheorem {ex} [thm] {Example}
\newcommand{\beq}{\begin{equation}}
\newcommand{\eeq}{\end{equation}}
\newcommand{\bthm}{\begin {thm}}
\newcommand{\ethm}{\end {thm}}
\newcommand{\bprop}{\begin {prop}}
\newcommand{\eprop}{\end {prop}}
\newcommand{\bprob}{\begin {prob}}
\newcommand{\eprob}{\end {prob}}
\newcommand{\bcor}{\begin {cor}}
\newcommand{\ecor}{\end {cor}}
\newcommand{\blem}{\begin{lem}}
\newcommand{\elem}{\end{lem}}
\newcommand{\bdefn}{\begin{defn}}
\newcommand{\edefn}{\end{defn}}
\newcommand{\brk}{\begin{rk}}
\newcommand{\erk}{\end{rk}}
\newcommand{\xto}{\xrightarrow}
\newcommand{\onto}{\twoheadrightarrow}
\newcommand{\av}{{\hbox{av}}}
\newcommand{\tMtg}{\tM_{U\tg,e}}
\newcommand{\tMeg}{\tM_{\t{G},e}}
\newcommand{\tG}{\t{G}}
\newcommand{\tGam}{\t{\Gamma}}
\newcommand{\Mtge}{\M_{\t{G},e}}
\newcommand{\Sh}{\h{Sh}}
\renewcommand {\bar} {\overline}
\renewcommand {\mapsto} {\longmapsto}
\renewcommand {\to} {\rar{}}
\newcommand{\bpf}{\begin{proof}}
\newcommand{\epf}{\end{proof}}
\newcommand{\bex}{\begin{ex}}
\newcommand{\eex}{\end{ex}}
\newcommand{\rar}[1]{\stackrel{#1}{\longrightarrow}}
\newcommand{\f}{\mathbb}
\newcommand{\bZ}{\mathbb{Z}}
\newcommand{\tensor}{\otimes}
\newcommand{\h}{\operatorname}
\newcommand{\FPdim}{\operatorname{FPdim}}
\newcommand{\ind}{\operatorname{ind}}
\newcommand{\Ind}{\operatorname{Ind}}
\newcommand{\Res}{\operatorname{Res}}
\newcommand{\indg}{\h{ind}_{G'}^G}
\newcommand{\Resg}{\h{Res}_{G'}^G}
\renewcommand{\L}{\mathcal{L}}
\newcommand{\cM}{\C}
\newcommand{\Fp} {\mathbb{F}_p}
\newcommand{\Fq} {\mathbb{F}_q}
\newcommand{\tK} {\widetilde{K}}
\newcommand{\Fqcl} {\overline{\mathbb{F}}_q}
\newcommand{\Q} {\mathbb{Q}}
\newcommand{\F} {\mathbb{F}}
\newcommand{\un} {\mathbf{1}}
\newcommand{\normal} {\triangleleft}
\renewcommand{\t}{\widetilde}
\renewcommand{\k} {\mathtt{k}}
\newcommand{\M} {\mathcal{M}}
\newcommand{\D} {\mathscr{D}}
\newcommand{\C} {\mathcal{C}}
\newcommand{\Hom} {\h{Hom}}
\newcommand{\RHom} {\h{RHom}}
\renewcommand{\Vec}{\h{Vec}}
\newcommand{\Vect}{\h{Vec}}
\newcommand{\Aut}{\h{Aut}}
\newcommand{\Stab}{\h{Stab}}
\newcommand{\Qlcl} {\overline{\mathbb{Q}}_l}
\newcommand{\G} {\mathbb{G}}
\newcommand{\Gtq} {G^t_0(\Fq)}
\newcommand{\Ggq} {G^g_0(\Fq)}
\newcommand{\noin}{\noindent}
\newcommand{\tg} {{\tilde{g}}}
\newcommand{\g} {{\gamma}}
\renewcommand{\l} {{\lambda}}
\newcommand{\id}{\hbox{id}}
\newcommand{\Meg}{\M_{G,e}}
\newcommand{\tM}{\widetilde{\M}}
\newcommand{\bit}{\begin{itemize}}
\newcommand{\eit}{\end{itemize}}
\newcommand{\Rep}{\h{Rep}}
\newcommand{\Irrep}{\h{Irrep}}
\newcommand{\DG}{\D_G(G)}
\newcommand{\FunG}{\Fun([G_0](\Fq))}
\newcommand{\DGp}{\D_{G'}(G')}
\newcommand{\DGn}{\D_{G_0}(G_0)}
\newcommand{\can}{\mathbb{K}}
\renewcommand{\r}{\mathfrak{r}}
\renewcommand{\O}{\mathcal{O}}
\newcommand{\E}{\mathcal{E}}
\newcommand{\Z}{\mathbb{Z}}
\renewcommand{\b}{\beta}
\newcommand{\tr}{\h{tr}}
\newcommand{\Fun}{\h{Fun}}
\newcommand{\bconj}{\begin{conj}}
\newcommand{\econj}{\end{conj}}
\newcommand{\DGt}{\D_G(\t{G})}
\newcommand{\DGtp}{\D_{G'}(\t{G}')}
\begin{document}

\maketitle

\begin{abstract}
\noin Let $\k$ be the algebraic closure of a finite field ${\F}_{q}$ of characteristic $p$. Let $G$ be a connected unipotent group over $\k$ equipped with an $\Fq$-structure given by a Frobenius map $F:G\to G$. We will denote the corresponding algebraic group defined over $\Fq$ by $G_0$. Character sheaves on $G$ are certain special objects in the triangulated braided monoidal category $\D_G(G)$ of bounded conjugation equivariant $\Qlcl$-complexes (where $l\neq p$ is a prime number) on $G$. Boyarchenko has proved that the ``trace of Frobenius'' functions associated with $F$-stable character sheaves on $G$ form an orthonormal basis of the space of class functions on $G_0(\Fq)$ and that the matrix relating this basis to the basis formed by the irreducible characters of $G_0(\Fq)$ is block diagonal with ``small'' blocks. In particular, there is a partition of the set of character sheaves as well as a partition of the set of irreducible characters of $G_0(\Fq)$ into ``small'' families know as $\f{L}$-packets.  In this paper we describe these block matrices relating character sheaves and irreducible characters corresponding to each $\f{L}$-packet. We prove that these matrices can be described as certain ``crossed $S$-matrices'' associated with each $\f{L}$-packet. We will also derive a formula for the dimensions of the irreducible representations of $G_0(\Fq)$ in terms of certain modular categorical data associated with the corresponding $\f{L}$-packet.  In fact we will formulate and prove more general results which hold for possibly disconnected groups $G$ such that $G^\circ$ is unipotent. To prove our results, we will establish a formula (which holds for any algebraic group $G$) which expresses the inner product of the ``trace of Frobenius'' function of any $F$-stable object of $\DG$ with any character of $G_0(\Fq)$ (or of any of its pure inner forms) in terms of certain categorical operations.
\end{abstract}

\tableofcontents

\section{Introduction}

Let $\k=\bar{\f{F}}_q$, where $q$ is a power of a prime $p.$ Let $G$ be an algebraic group (or perfect quasi-algebraic group, see \S\ref{nac}) over $\k$ equipped with an $\Fq$ structure given by a Frobenius map $F:G\to G$. Let us denote the corresponding algebraic group defined over $\Fq$ by $G_0$. Let us fix a prime number $l\neq p.$ One of the main goals of the theory of character sheaves is to study the $\Qlcl$-valued irreducible characters of the groups $G_0(\Fq)$ using the machinery of $\Qlcl$-sheaves and complexes on $G$.

In the series of papers \cite{L}, Lusztig developed a theory of character sheaves on reductive groups and used it to study the characters of finite groups of Lie type. Inspired by Lusztig's work, Boyarchenko and Drinfeld (in \cite{BD1}, \cite{BD2}, \cite{Bo1}, \cite{Bo2}) developed a theory of character sheaves on unipotent groups and related it to the character theory of finite unipotent groups. In both these cases, character sheaves are isomorphism classes of certain objects in the $\Qlcl$-linear triangulated braided monoidal category $\D_G(G)$ of conjugation equivariant $\Qlcl$-complexes on $G$. We have an action of Frobenius on the set of character sheaves, or in other words pullback by Frobenius of a character sheaf is also a character sheaf.

Let us suppose that $G$ is a {\it connected unipotent} group. In \cite{Bo2}, Boyarchenko proves that there is a bijection between the set of irreducible characters of $G_0(\Fq)$ and the set of isomorphism classes of character sheaves fixed by Frobenius. More precisely he shows that the $\Qlcl$-valued ``trace of Frobenius'' functions on $G_0(\Fq)$ associated with $F$-stable character sheaves form an orthonormal basis for the space of $\Qlcl$-valued class functions on $G_0(\Fq)$ and that the matrix relating this basis to the basis formed by the irreducible characters of $G_0(\Fq)$ is block diagonal with ``small'' blocks. A similar result is proved by Lusztig in \cite{L} for reductive groups.

Now let $G$ be (possibly disconnected) such that its neutral connected component $G^\circ$ is unipotent. (The notion of character sheaves on $G$ is well defined in this case.) In this case, the number of $F$-stable character sheaves on $G$ may be  strictly larger than the number of irreducible characters of the group $G_0(\Fq)$. As observed in \cite{Bo2}, in this situation it is more natural to consider all pure inner forms (see \S\ref{pif}) of $G_0$. One reason for this is that if $C\in \D_G(G)$ is such that we have an isomorphism $\psi:F^*(C)\xto{\cong}C$, then we can define ``trace of Frobenius'' function $t_{C,\psi}^g:G^g_0(\Fq)\to \Qlcl$ for the pure inner form $G^g_0(\Fq)$ corresponding to each $g\in G$ (see \S\ref{sfcorr}). The above discrepancy between the number of characters and character sheaves disappears once we consider the irreducible characters of the pure inner forms as well. Once again the ``trace of Frobenius functions'' of $F$-stable character sheaves satisfy similar orthogonality relations and the matrix relating them to the irreducible characters of all pure inner forms is again block diagonal as in the connected case. (See \cite{Bo2} for more.)

The main goal of this paper is to describe these block matrices in the (block diagonal) change of basis matrix. It is shown in \cite{Bo2} that these blocks are labeled by $F$-stable minimal idempotents $e\in \DG$. Now as proved in \cite{BD2}, we have the modular category $\M_{G,e}\subset e\DG$ associated with the minimal idempotent $e$. Since $e$ is $F$-stable, $F^*$ induces an autoequivalence of the modular category $\M_{G,e}$. We prove that the block matrix labeled by $e$ is the ``crossed $S$-matrix'' (see \S\ref{bcgandcsm}) associated with the modular category $\Meg$ and its autoequivalence $F^*$.

In the first part of the paper, we work with a general algebraic group $G$ and derive an inner product formula (Theorem \ref{ipf}) for the inner product of the ``trace of Frobenius'' function of a $F$-stable object in $\DG$ and the character of a representation of an inner form $G^g_0(\Fq)$. We then use this formula to describe the block matrices for groups having a unipotent neutral connected component in \S\ref{mrc}. We believe that the inner product formula (Theorem \ref{ipf}) would be useful to describe the relationship between character sheaves and irreducible characters of general and in particular reductive algebraic groups as well. 

We will formulate all the main results of this paper in \S\ref{summary} and prove them in subsequent sections. In \S\ref{s:examples}, we will study two instructive examples in detail. Here we will also encounter some interesting examples of crossed $S$-matrices and nontrivial $\f{L}$-packets. In \S\ref{grows} we prove that the algebra of class functions on all the pure inner forms $G^g_0(\Fq)$ is isomorphic to a certain Grothendieck ring of $F$-stable character sheaves on a neutrally unipotent group $G$.

\section*{Acknowledgments}
I am grateful to V. Drinfeld for introducing me to the theory of character sheaves and for many useful discussions and correspondence. I would also like to thank M. Boyarchenko and V. Ostrik for useful correspondence. This work was supported by World Premier Institute Research Center Initiative (WPI), MEXT, Japan.

\section{Preliminaries and main results}\label{summary}

In this section we describe the main definitions and constructions required in this paper, state the main results and describe the organization of the text.

\subsection{Notation and conventions}\label{nac}
We fix a prime number $p$ and $q$, a power of $p$. The field $\k$ will always denote an algebraically closed field of characteristic $p$. Typically we will assume that $\k=\bar{\F}_q$. All algebraic groups and schemes are assumed to be over $\k$ unless explicitly stated otherwise. We also fix a prime number $l$ different from $p$. For a scheme $X$ over $\k$, we set $\D(X):=D^b_c(X,\Qlcl)$, the $\Qlcl$-linear triangulated category of bounded constructible $\Qlcl$-complexes on $X$. If an algebraic group $G$ acts on $X$, we let $\D_G(X)$ denote the equivariant derived category. It is defined as the category of $\Qlcl$-complexes on the quotient stack $G\backslash X$. If $G$ is such that $G^\circ$ is unipotent, then objects of $\D_G(X)$ may be thought of as pairs $(M,\phi)$, where $M\in \D(X)$ and $\phi$ is a $G$-equivariance structure on $M$. We refer to \cite{BD2} for details. We will omit the symbols $L,R$ when talking about Grothendieck's six functors and all such functors should be interpreted in the derived sense. We denote the Verdier duality functor by $\f{D}$.

If $G$ is an algebraic group, then $\D(G)$ has the structure of a monoidal category under convolution with compact supports.  The  unit object of this monoidal category is the delta sheaf $\delta_1$ supported at the identity $1\in G$. The conjugation equivariant category $\D_G(G)$ has the structure of a braided monoidal category. We refer to \cite{BD2} for complete definitions. Let $\iota:G\to G$
 be the inversion map. The functor $\f{D}^-:=\iota^*\f{D}=\f{D}\iota^*$ defines a duality in the categories $\D(G)$ and $\DG$ which is weaker than rigidity. This weak duality makes $\D(G)$ and $\DG$ $\r$-categories (see \cite[\S A]{BD2}). The natural identification $(\f{D}^-)^2\cong Id$ defines a structure of a ribbon $\mathfrak{r}$-category on $\DG$. We refer to \cite[\S A]{BD2} for a detailed exposition.

We will often work in the setting of perfect schemes and perfect group schemes over $\k$. We refer to \cite[\S1.9]{BD2} for more details on the notions of perfect groups, perfect schemes and the perfectization functor. We often abuse notation and use the same symbol to donate a scheme (or algebraic group) and its perfectization.
This is not likely to cause any confusion since the categories $\D_G(X)$ and the groups $G(\k)$, $G_0(\Fq)$  do not change after passing to the perfectization. There are many advantages of passing to the perfectization. For example, after passing to the perfectization the Frobenius map $F$ becomes an isomorphism. Also perfectness is needed in the very useful notion of \emph{Serre duality} of connected unipotent groups. Hence from now on by schemes we in fact mean perfect quasi-algebraic schemes (i.e. perfectizations of schemes of finite type) and by algebraic groups we mean perfect quasi-algebraic groups (i.e. perfectizations of algebraic groups) even if we do not mention this explicitly.

\subsection{Sheaf-function correspondence}
Let $X$ be a scheme over $\k$ equipped with an $\Fq$-structure given by a Frobenius $F:X\to X$. Let $X_0$ be the corresponding scheme defined over $\Fq$. Let $\D^{Weil}(X_0)$ be the category consisting of pairs $(M,\psi)$ such that $M\in \D(X)$ and $\psi:F^*(M)\xto\cong M$ is an isomorphism in $\D(X)$. We call objects of $\D^{Weil}(X_0)$ as Weil complexes on $X$. We have the natural functor $\D(X_0)\to \D^{Weil}(X_0)$. If $(M,\psi)\in \D^{Weil}(X_0)$, then we have the stalk maps $\psi(x):M_{F(x)}\to M_x$ between complexes of $\Qlcl$-vector spaces. Then we define the trace of Frobenius function $t_{M,\psi}:X_0(\Fq)\to \Qlcl$ by $x\mapsto tr(\psi(x))$.

If $X$, $Y$ are schemes over $\k$ with an $\Fq$-structure and if $f:X\to Y$ is an $\Fq$-morphism, then we have the induced six functors $f_*,f_!,f^*,f^!,\otimes, \mathpzc{Hom}$ with all the standard adjunctions in the context of Weil complexes as well. By a slight abuse of notation we also let $f$ denote the induced map of finite sets $f:X_0(\Fq)\to Y_0(\Fq)$. In this context, $f^*$ also denotes pullback of functions on $Y_0(\Fq)$ along $f$, and $f_!$ also means summing up a function on $X_0(\Fq)$ along the fibers of $f$. The next lemma says that pullbacks, pushforwards with compact support and tensor product are compatible with the sheaf-function correspondence.
\blem (See \cite[Lem. 4.4]{Bo1}.) Let $f:X\to Y$ be as in the previous paragraph. \\
(i) If $N\in \D^{Weil}(Y_0)$, then $t_{f^*N}=f^*(t_N):=t_N\circ f$.\\
(ii) If $M,K\in \D^{Weil}(X_0)$, then $t_{M\otimes K}=t_M\cdot t_K.$\\
(iii) Assume that $f$ is also separated. If $M\in \D^{Weil}(X_0)$, then $t_{f_!(M)}=f_!(t_M)$.
\elem

An immediate consequence is the following:
\blem\label{compwithconv}
Let $G$ be an algebraic group with an $\Fq$-structure. Then if $M,N\in \D^{Weil}(G_0)$, then $t_{M\ast N}=t_M\ast t_N$, where the right hand side is the convolution of functions on the finite group $G_0(\Fq)$.
\elem

\subsection{Braided crossed categories and crossed $S$-matrices}\label{bcgandcsm}
Let $\Gamma$ be an abstract group. We briefly recall the notion of a braided $\Gamma$-crossed category.  We refer to \cite[\S 4.4.3]{DGNO} for a precise definition and properties of braided crossed categories and related concepts. A braided $\Gamma$-crossed category $\D$ is an additive monoidal category $(\D,\otimes,\un)$ equipped with the following structure:
\bit
\item A monoidal grading $\D=\bigoplus\limits_{a\in\Gamma}\C_a$. 
\item A monoidal action of $\Gamma$ on $\D$ such that $a(\C_b)\subset \C_{aba^{-1}}$ for each $a,b\in\Gamma$.
\item For $a\in \Gamma, M\in\C_a$ and $C\in \D$ isomorphisms (called crossed braiding isomorphisms)
$$\beta_{M,C}:M\otimes C\stackrel{\cong}{\rightarrow} a(C)\otimes M$$ functorial in $M,C$ and satisfying certain compatibility conditions which we do not explicitly recall here. These conditions in particular imply that $\C_1$ is a braided monoidal category and that the induced action of $\Gamma$ on $\C_1$ is a braided action.

\eit

We say that such a $\D$ is faithfully graded if $\C_a$ is nonzero for each $a\in \Gamma$. In this paper we will only encounter or consider faithfully graded categories.

Let $\D$ be such a category and let $a\in \Gamma.$ Let $M\in \C_a$ and $C\in \C_1$ be such that $a(C)\cong C$. Let us fix an isomorphism $\psi:a^{-1}(C)\xto{\cong}C$. We have the following composition
\beq\label{compisom}
C\otimes M\xto{\psi^{-1}\otimes \id_M}a^{-1}(C)\otimes M\xto{\beta_{a^{-1}(C),M}}M\otimes a^{-1}(C)\xto{\beta_{M,a^{-1}(C)}} C\otimes M \mbox{ in $\C_a$}.
\eeq 
Let $\g_{C,\psi,M}\in \Aut_{\C_a}(C\otimes M)
$ denote the inverse of the composition above.

\subsubsection{Module categories and crossed $S$-matrices}\label{modcatandcsm}
Let $\C$ be a ($\Qlcl$-linear) modular category and let $\M$ be an invertible $\C$-module category. Let $\O_\C$ and $\O_\M$ denote the sets of isomorphism classes of simple objects in $\C$ and $\M$ respectively. Let $\M$ be equipped with a $\C$-module trace $\tr_{\M}$. This means that we can define traces $\tr_\M(f)\in \Qlcl$ of endomorphisms $f$ in $\M$ and these are compatible with the spherical structure on $\C$. (We refer to \cite{S} for details.) In particular, using the module trace we can define categorical dimensions of objects of $\M$, namely if $M\in\M$, then $\dim(M):=\tr_{\M}(\id_M)$. The compatibility with the spherical structure on $\C$ implies that  $\dim(C\otimes M)=\dim(C)\cdot\dim(M)$ for each $C\in \C, M\in \M$. We impose an addition condition on the trace, namely we assume that the trace is normalized in such a way that 
\beq\label{normtr}
\sum\limits_{M\in \O_\M}\dim(M)^2=\dim(\C).
\eeq

Since $\M$ is an invertible $\C$-module category that admits a module trace we obtain using \cite[Thm. 5.2]{ENO} the corresponding autoequivalence $a:\C\to \C$ of modular categories which is unique up to natural equivalence of functors. Hence this induces a well defined permutation (which we also call $a$) of the set $\O_\C$. For $C\in \O_\C^a$, we choose an isomorphism $\psi:a^{-1}(C)\xto{\cong}C$. Then by definition of the autoequivalence $a$ we have the following composition in $\M$ analogous to (\ref{compisom}):
$$C\otimes M\xto{\cong}a^{-1}(C)\otimes M\xto{\cong}M\otimes a^{-1}(C)\xto{\cong} C\otimes M \mbox{ for $C\in \C, M\in \M$.}$$

We let $\g_{C,\psi,M}$ denote the inverse of this composition. 

\brk\label{crbrandmod}
Using \cite{ENO} we see that if $\D$ is a {\it spherical} (faithfully graded) braided $\Gamma$-crossed category whose trivial component $\C_1$ is a modular category then for each $a\in \Gamma$, the component $\C_a$ is an invertible $\C_1$-module category (with module trace induced by the spherical structure on $\D$) which satisfies the requirements of the first paragraph of this subsection. In particular, the $\C_1$-module trace on $\C_a$ obtained from the spherical structure of $\D$ is normalized in the sense of (\ref{normtr}). We note that in this situation, the automorphism $\g_{C,\psi,M}$ defined above matches with the one defined by (\ref{compisom}).
\erk

\bdefn\label{Smatrix}
Let $\C, \M$ and $a:\C\to \C$ be as before. For each $C\in\O_\C^a$ let us choose isomorphisms $\psi_C:a^{-1}(C)\to C$. We define the crossed $S$-matrix associated with the modular category $\C$ and the $\C$-module category $\M$ to be the matrix $S^{\C,\M}=\left(S^{\C,\M}_{C,M}\right)_{C\in \O_\C^a, M\in \O_\M}$ whose entries are defined by 
$$S^{\C,\M}_{C,M}:=\tr_\M(\g_{C,\psi_C,M}) \mbox{ for each $C\in \O_\C^a, M\in \O_\M$.}$$
\edefn

Our goal in this paper is to show that the blocks in the block diagonal matrix relating characters and character sheaves are precisely such crossed $S$-matrices for suitable choices of the categories $\C$ and $\M$ and to describe these categories corresponding to each block. 

\brk
The crossed $S$-matrix is well defined only up to a rescaling of the rows since the definition involves a choice of the isomorphisms $\psi_C$.
\erk

\brk
Let us consider $\C$ as an invertible $\C$-module category equipped with the trace coming from the spherical structure. We choose the autoequivalence $a=\id_{\C}$ and for each $C\in \O_\C$, we choose $\psi_C=\id_C$. With these choices $S^{\C,\C}$ equals the usual $S$-matrix of the modular category $\C$.
\erk

\subsection{Arbitrary algebraic groups}
We first discuss some generalities for arbitrary (perfect quasi-) algebraic groups $G$ over $\k$ and state our first main result (Theorem \ref{ipf}) which holds for arbitrary algebraic groups $G$ over $\k$ equipped with an $\Fq$-structure. In this subsection we assume that $G$ is any algebraic group over $\k$ and $F:G\to G$ is a Frobenius automorphism which equips $G$ with an $\Fq$-structure. We begin by recalling the notion of pure inner forms and explain their natural role in the theory of character sheaves.

\subsubsection{The $F$-conjugation action and pure inner forms}\label{pif}
We define the $F$-conjugation action of $G$ on itself by $g:h\mapsto ghF(g^{-1})$. Let us denote the set of $F$-conjugacy classes in $G$ by $H^1(F,G)$. By Lang's theorem, we have a natural bijection $H^1(F,G)\xto{\cong}H^1(F,\pi_0(G))$ and $H^1(F,G)$ is a finite set.

For each $g\in G$, the map $ad(g)\circ F:G\to G$ is a Frobenius map which defines a new $\Fq$-structure on $G$ and we denote the corresponding group defined over $\Fq$ by $G_0^g$ and call it the pure inner form of $G_0$ determined by $g\in G$. The isomorphism class of $G^g_0$ only depends on the $F$-conjugacy class of $g$ since we have a commutative diagram
$$\xymatrixcolsep{7pc}\xymatrix{
G\ar[r]^-{ad(g)\circ F}\ar[d]_{ad(h)} & G\ar[d]^{ad(h)}\\
G\ar[r]^-{ad(hgF(h^{-1}))\circ F} & G.\\
}$$

 If $I\subset G$ is a set of representatives of $F$-conjugacy classes in $G$, we consider the finite set of pure inner forms $\{G^t_0\}_{t\in I}$. As we will see, it is more natural to consider all the pure inner forms $\{G^t_0\}$ than to just consider $G_0$.

From the diagram above, we also see that the $F$-conjugacy class $<g>\in H^1(F,G)$ determines $G^g_0$ up to an isomorphism that is unique up to inner automorphisms defined over $\Fq$. In particular we see that the commutative $\Qlcl$-algebra $\Fun(G_0^g(\Fq))^{\Ggq}$ of class functions under convolution is canonically determined by $<g>\in H^1(F,G)$. Similarly the set $\Irrep(\Ggq)$ is also canonically determined by the $F$-conjugacy class of $g$.

Let us define the set 
\beq\label{defirrep}
\Irrep(G_0):=\coprod\limits_{<t>\in H^1(F,G)}{\Irrep(G^t_0(\Fq))}.
\eeq


\brk
The algebraic groups $G_0^{g}$ and $G_0^{h}$ defined over $\Fq$ may still be isomorphic even though $g,h\in G$ may lie in \emph{different} $F$-conjugacy classes. Even so, we regard $G^g_0$ and $G^h_0$ as \emph{distinct} pure inner forms in this case.
\erk

\brk
Note that if $G$ is connected, then there is only one $F$-conjugacy class and we  only need to consider the trivial pure inner form $G_0$.
\erk

\bprop\label{transport}(See \cite[Cor. 4.10]{Bo2}.)
For each pure inner form $G^g_0$ there is a natural equivalence $\D_{G_0}(G_0)\cong \D_{G^g_0}(G^g_0)$ of braided monoidal categories. If $C_0\in \D_{G_0}(G_0)$, we let $C_0^g$ denote the corresponding object in $\D_{G^g_0}(G^g_0)$ obtained using this transport functor.
\eprop

\subsubsection{$\D^F_G(G)$ as a triangulated category}\label{astc}
Let $\D_G^F(G)$ denote the category of $G$-equivariant $\Qlcl$-complexes on $G$ for the $F$-conjugation action of $G$ on itself.  By Lang's theorem each $F$-conjugacy class in $G$ is a union of certain connected components of $G$. Hence we have
\beq
\D_G^F(G)=\bigoplus\limits_{\O \in H^1(F,G)}\D_G^F(\O).
\eeq
For an $F$-conjugacy class $\O\subset G$, let us describe the category $\D_G^F(\O)$. The $F$-conjugation action on $\O$ is transitive, hence we have an equivalence $\D_G^F(\O)\cong\D_{\Stab^F_G(t)}(t)$ where $t$ is some point in $\O$ and $\Stab^F_G(t)$ is its stabilizer for the $F$-conjugation action. But $\Stab^F_G(t)=G^{ad(t)\circ F}=G_0^t(\Fq)$ is a finite group. Hence we have
\beq\label{Fconjorbit}
\D_G^F(\O)\cong D^b(\h{Rep}(G_0^t(\Fq))).
\eeq

Hence $\D_G^F(G)$ as a $\Qlcl$-linear triangulated category encodes the representation theory of the groups of $\Fq$-points of all the pure inner forms of $G_0$. The set $\Irrep(G_0)$ defined by (\ref{defirrep}) along with all their shifts give all the simple objects of the triangulated category $\D^F_G(G)$ (which also happens to be semisimple abelian). For $W\in \Irrep(\Gtq)\subset \Irrep(G_0)$, let $W_{loc}\in \D_G^F(G)$ denote the corresponding object. $W_{loc}$ is a local system supported on the $F$-conjugacy class $\O$ of $t$.

\subsubsection{$\D_G^F(G)$ as a $\D_G(G)$-module category}\label{modcat}
Let $C\in \D_G(G)$ and $M\in \D_G^F(G)$. Consider the action of $G$ on $G\times G$ given by conjugation of the first coordinate and $F$-conjugation of the second coordinate. Then $C\boxtimes M\in \D_G^{1,F}(G\times G)$, the equivariant derived category for this action of $G$ on $G\times G$. The multiplication map $\mu:G\times G\to G$ is $G$-equivariant where the action of $G$ on the latter $G$ is given by $F$-conjugation. Hence $C\ast M\in \D_G^F(G)$. Hence $\D_G^F(G)$ is a module category over the braided monoidal category $\D_G(G)$. Similarly we can prove that $M\ast C\in \D_G^F(G).$

Note that if $C\in \D_G(G)$, we have the braiding isomorphism $\b_{C,M}:C\ast M\to M\ast C$ for each $M\in \D(G).$ Moreover, if $M\in\D_G^F(G)$, we have the crossed braiding isomorphism $\b_{M,C}:M\ast C\to F^*(C)\ast M.$

\subsubsection{The semidirect product $G\rtimes\f{Z}$}\label{semidir}
Sometimes it is more convenient to think of the category $\D^F_G(G)$ in terms of the semidirect product $\t{G}:=G\rtimes \f{Z}$, where $1\in \f{Z}$ acts on $G$ by the Frobenius $F:G\to G$. We identify $\t{G}=\{gF^n|g\in G, n\in \f{Z}\}$ where $gF^n\cdot hF^m=gF^n(h)F^{n+m}$. Now $G$ acts on each coset $GF^n$ by conjugation.  We consider the category (see also \S\ref{brcrgrex})
$$\D_G(\t{G}):=\bigoplus\limits_{n\in \f{Z}}\D_G(GF^n).$$ It has the structure of a braided $\f{Z}$-crossed category. In particular we have a braided monoidal action of $\f{Z}$ on $\DG$ defined by $F^n(C):=(F^{-n})^*(C)$ for each $n\in \f{Z}$ and $C\in \DG$.

We have an identification 
\beq
\D^F_G(G)\cong\D_G(GF).
\eeq
Now $\D_G(GF)$ is also a $\DG$-bimodule category under convolution. For $M\in \D^F_G(G)$, we denote the corresponding object of $\D_G(GF)$ by $\t{M}$. Then for $C\in \DG$ and $M\in \D^F_G(G)$ we have natural isomorphisms
\beq\label{conv1}
\t{C\ast M}\cong C\ast \t{M},
\eeq
\beq\label{conv2}
\t{M\ast F(C)}\cong \t{M}\ast C \mbox{ or equivalently }\t{M\ast C}\cong \t{M}\ast F^*(C).
\eeq

With this identification, we will work interchangeably with the two equivalent categories $\D^F_G(G)$ and $\D_G(GF)$.

\subsubsection{A trace on $\D_G^F(G)$}\label{traceondgf}
For an object $M\in \D_G^F(G)$ and any endomorphism $f:M\to M$, we will define a trace $\h{tr}_F(f)\in \Qlcl.$ It suffices to consider $M\in \D_G^F(\O)\cong D^b(\h{Rep}(G_0^t(\Fq)))$ for some $\O\in H^1(F,G)$ and a point $t\in \O$.
Now the triangulated category $D^b(\h{Rep}(G_0^t(\Fq)))$ has the classical traces $tr(f)$ for endomorphisms $f:M\to M$. We rescale this classical trace and define 
\beq\label{deftr}
\tr_F(f)=\frac{tr(f)}{|G^t_0(\Fq)|}.
\eeq

We extend by additivity to define traces in the category $\D_G^F(G)$.

\subsubsection{The Frobenius algebra $\Fun([G_0](\Fq))$}\label{frobal}
Consider the subset $R\subset G\times G$ defined by 
\beq R=\{(x,g)|F(x)=g^{-1}xg\}=\{(x,g)|x\in G^g_0(\Fq)=\Stab_G^F(g)\}.\eeq
Clearly $(x,g)\in R$ if and only if $(x,xg)\in R$. 

If $(x,g)\in R$ and $h\in G$ then ${ }^h(x,g):=({ }^hx, hgF(h^{-1}))\in R$. Hence $R\subset G\times G$ is $G$-stable for the left action of $G$ on $G\times G$ described in \S\ref{modcat}. The number of $G$-orbits in $R$ is finite and the orbits correspond bijectively to the union of sets of conjugacy classes of the groups of $\Fq$-points of all inner forms of $G_0$ corresponding to $H^1(F,G)$. More precisely, each $G$ orbit in $R$ can be uniquely represented by a pair $(h,t)$ where $t\in G$ is a representative of an $F$-conjugacy class and $h\in \Gtq$ represents a conjugacy class in the inner form $\Gtq$ associated with $t\in G$.

Note that for $(x,g)\in R$
\beq\label{stab}
\Stab_G(x,g)=Z_{\Ggq}(x),
\eeq
the centralizer of $x$ in $\Ggq$.

\brk\label{Geq}
The map $(p_1,\mu):(x,g)\mapsto (x,xg)$ is $G$-equivariant. Hence for $(x,g)\in R$, we have
\beq\label{stabxg}
\Stab_G(x,g)=\Stab_G(x,xg).
\eeq
Also, a subset $\{(h,t)\}\subset R$ is a complete set of representatives of $G$-orbits in $R$ if and only if the subset $\{(h,h^{-1}t)\}\subset R$ is a complete set of representatives.
\erk

Consider the space of $G$-invariant $\Qlcl$-valued functions defined on the set $R$. We can identify this space with the space $\Fun([G_0](\Fq))$ defined in \cite[\S2.2]{Bo2}. In the terminology of {\it op. cit.}, $[G_0]$ denotes the quotient stack of $G_0$ by the conjugation action and the groupoid $[G_0](\Fq)$ can be identified with the disjoint union of the groupoids $[G^t_0(\Fq)]$ (see \cite[Example 2.8]{Bo2}) as $t$ ranges over a set of representatives of $F$-conjugacy classes in $G$.

The space $\Fun([G_0](\Fq))=\Fun_G(R)$ is an algebra under convolution.
 As an algebra it can be identified with the product of the convolution algebras of class functions on the pure inner forms $G^t_0(\Fq)$:
\beq
\Fun_G(R)=\FunG=\prod\limits_{<t>\in H^1(F,G)}{\Fun(\Gtq)^{\Gtq}}.
\eeq

 Moreover it is a Frobenius algebra with bilinear form defined by 
\beq
(f_1,f_2):=\sum\limits_{<(h,t)>\in G\backslash R}{\frac{f_1(h,t){f_2(h^{-1},t)}}{|\Stab_G(h,t)|}}.
\eeq
By Remark \ref{Geq} we see that
\beq
(f_1,f_2)=\sum\limits_{<(h,t)>\in G\backslash R}{\frac{f_1(h,h^{-1}t){f_2(h^{-1},h^{-1}t)}}{|\Stab_G(h,t)|}}.
\eeq
The corresponding linear functional $\lambda: \Fun([G_0](\Fq))\to \Qlcl$ is given by 
\beq\label{linfun}
\lambda(f)=\sum\limits_{<t>\in H^1(F,G)}\frac{f(1,t)}{|G^t_0(\Fq)|}.
\eeq

We also define a Hermitian inner form $<\cdot,\cdot>$ on the space $\Fun([G_0](\Fq))$. For this we fix an isomorphism $\Qlcl\to \f{C}$, whence we get a conjugation involution $\bar{(\cdot)}$ of the field $\Qlcl$ which maps roots of unity in $\Qlcl$ to their inverses. We define a Hermitian inner product on $\Fun([G_0](\Fq))$ by 
\beq
<f_1,f_2>:=\sum\limits_{<(h,t)>\in G\backslash R}{\frac{f_1(h,t)\bar{f_2(h,t)}}{|\Stab_G(h,t)|}}=\sum\limits_{<(h,t)>\in G\backslash R}{\frac{f_1(h,h^{-1}t)\bar{f_2(h,h^{-1}t)}}{|\Stab_G(h,t)|}}.
\eeq

By taking characters of irreducible representations, we consider the set $\Irrep(G_0)$ defined by (\ref{defirrep}) as a subset of $\FunG$. Using the orthogonality relations for irreducible characters we get:
\bprop\label{charofirrep}
The set $\Irrep(G_0)\subset \FunG$ is an orthonormal basis of $\FunG$ with respect to  both the Hermitian inner product $<\cdot,\cdot>$ as well as the bilinear inner product $(\cdot,\cdot)$.
\eprop

\subsubsection{The character of an object of $\D_G^F(G)$}\label{cofdfg}
If $M\in \D_G^F(G)$, we will define its character $\chi_M$ as a $G$-invariant function from the set $R$ to $\Qlcl$. If $M\in \D_G^F(G)$ and $g\in G$, the stalk $M_g$ can be considered as an object of $D^b(\h{Rep}(G^g_0(\Fq)))$ and we have its character $\chi_{M_g}:G^g_0(\Fq)\to \Qlcl.$ If $(x,g)\in R$, we define $\chi_M(x,g):=\chi_{M_g}(x).$ 

Equivalently, for $M\in \D_G^F(G)$ and $(x,g)\in G\times G$, we have an isomorphism $\phi_M(x,g): M_g\to M_{xgF(x)^{-1}}$, where $\phi_M$ is the equivariant structure associated to $M\in \D_G^F(G)$. For $(x,g)\in R$, $\phi_M(x,g)$ is an automorphism of $M_g$ and we have 
\beq\label{chim}\chi_M(x,g):=tr(\phi_M(x,g)).\eeq 
It is easy to check that $\chi_M:R\to \Qlcl$ is $G$-invariant.

Hence given any object $M\in \D^F_G(G)$, we have its character $\chi_M\in \Fun([G_0](\Fq))$.

Now if $W\in \Irrep(G_0)$, we can take its character $\chi_W\in \FunG$. It is clear from the definition that $\chi_W$ is the same as $\chi_{W_{loc}}$ as defined in this subsection.


\subsubsection{The ``trace of Frobenius'' function of an equivariant Weil sheaf}\label{sfcorr}
Consider the category $\D_{G_0}^{Weil}(G_0)$ whose objects are pairs $(C,\psi)$ consisting of an object $C\in \D_G(G)$ and an isomorphism $\psi:F^*(C)\to C$  in $\D_G(G)$. Let $(C,\psi)\in \D^{Weil}_{G_0}(G_0)$. Let $g\in G$ and consider the inner form $G^g_0$ defined over $\Fq$. Using the sheaf-function correspondence we define the associated trace function 
$$t^g_{C,\psi}:G^g_0(\F_q)\to \Qlcl.$$ 
We define the function $T_{C,\psi}:R\to \Qlcl$ by $T_{C,\psi}(x,g):=t^g_{C,\psi}(x).$

More precisely, for $C\in \D_G(G)$, $x,g\in G$, we have isomorphisms $$\phi_C(g^{-1}, gF(x)g^{-1}):C_{gF(x)g^{-1}}\to C_{F(x)}.$$ If $\psi:F^*M\xto{\cong} M$ is a Weil structure, we have isomorphisms $\psi(x):C_{F(x)}\to C_x$. If $(x,g)\in R$, then $\psi(x)\circ\phi_C(g^{-1}, gF(x)g^{-1})$ is an automorphism of $C_x$ and we define
\beq
T_{C,\psi}(x,g):=tr(\psi(x)\circ\phi_C(g^{-1}, gF(x)g^{-1})).
\eeq
The function $T_{C,\psi}:R\to \Qlcl$ is also $G$-invariant.

Hence given an object $(C,\psi)\in \D_{G_0}^{Weil}(G_0)$, we have its trace function $T_{C,\psi}\in \FunG$.

For each $g\in G$, we have natural functors $\D_{G^g_0}(G^g_0)\to \D^{Weil}_{G_0}(G_0)$ that are compatible with the identification in Proposition \ref{transport}. If $C_0\in \D_{G_0}(G_0)$, we let $T_{C_0}\in \FunG$ denote the corresponding function.

Using Lemma \ref{compwithconv} we obtain:
\blem\label{eqcompwithconv}
Let $C,D\in \D^{Weil}_{G_0}(G_0)$. Then $T_C\ast T_D=T_{C\ast D}$.
\elem

\subsubsection{The inner product formula}\label{secipf}

Suppose that $M\in \D_G^F(G)$ and $(C,\psi)\in\D^{Weil}_{G_0}(G_0)$. Then we have the automorphism
\beq
\g_{C,\psi,M}:C\ast M \xto{\beta_{C,M}} M\ast C\xto{\beta_{M,C}} F^*(C)\ast M\xto{\psi\ast \id_M} C\ast M
\eeq
in $\D_G^F(G)$.

\brk
If we use the identification $\D^F_G(G)\cong \D_G(GF)$ (see \S\ref{semidir}), then using (\ref{conv1}),(\ref{conv2}) we see that $\g_{C,\psi,M}$ as defined above gets identified with $\g_{C,\psi,\t{M}}$ as defined in \S\ref{bcgandcsm}.
\erk

In our first main result, we describe the inner product of the functions associated with $(C,\psi)$ and $M$.
\bthm\label{ipf}
For $(C,\psi)\in \D_{G_0}^{Weil}(G_0)$ and $M\in \D_G^F(G)$ the inner product of the functions $T_{C,\psi}, \chi_M\in \FunG$ is given by
\beq
(T_{C,\psi},\chi_M)=<T_{C,\psi},\chi_M>= \tr_F(\g_{C,\psi,M}).
\eeq
\ethm
We will prove this result in \S\ref{pfofipf} using the Grothendieck-Lefschetz trace formula.

\subsection{Neutrally unipotent groups}
We now restrict our attention to neutrally unipotent groups $G$, namely groups whose neutral connected component $G^\circ$ is unipotent. The theory of character sheaves for such groups was developed by Boyarchenko and Drinfeld in \cite{BD1}, \cite{BD2}, \cite{Bo1}, \cite{Bo2}. We briefly recall some aspects of this theory in this subsection and state our main results and conjectures.

\subsubsection{Character sheaves on neutrally unipotent groups}\label{csnug}
Let $G$ be a neutrally unipotent group defined over $\k$. Character sheaves on $G$ are defined in terms of minimal idempotents in the braided monoidal category $\DG$. To define character sheaves we do not need an $\Fq$-structure on $G$. We recall the definition of character sheaves in Definition \ref{CSdef} below. 

An idempotent in $\DG$ is an object $e\in \DG$ such that $e\ast e\cong e.$ An idempotent $e$ is said to be minimal if $e\neq 0$ and for any idempotent $e'$ in $\DG$, either $e\ast e'=0$ or $e\ast e'\cong e.$ An idempotent $e$ is said to be closed if there exists an arrow $\delta_1\to e$ in $\DG$ which becomes an isomorphism after convolution with $e$.

The following statements are proved in \cite{BD2}, \cite{De1} and \cite{De2}:
\bthm\label{BDmain} Let $G$ be a neutrally unipotent group and let $e$ be a minimal idempotent in $\DG$. Then we have the following:\\
(i) The idempotent $e$ is in fact a closed idempotent and hence the Hecke subcategory $e\DG$ is a monoidal category with unit object $e$.\\
(ii) Let $\M_{G,e}^{perv}\subset e\DG$ be the full subcategory of those objects whose underlying $\Qlcl$-complex is a perverse sheaf on $G$. Then $\M_{G,e}^{perv}$ is a semisimple abelian category with finitely many simple objects and $e\DG\cong D^b(\M_{G,e}^{perv})$.\\
(iii) There exists an integer $n_e\in \{0,1,\cdots,\dim G\}$ such that $e[-n_e]\in \M_{G,e}^{perv}$. This means that every minimal idempotent $e\in \DG$ is a perverse sheaf up to a shift. Moreover, we have $$\f{D}^-e\cong e[-2n_e](-n_e)$$ in $\DG$, where $\f{D}^-$ is the dualization functor introduced in \S\ref{nac}.\\
(iv) The category $\Meg:=\M_{G,e}^{perv}[n_e]$ is closed under convolution. In fact $\Meg$ is a modular category with unit object $e$. The ribbon structure on $\Meg$ comes from the natural ribbon $\mathfrak{r}$-category structure on $\DG$.\\
(v) We have $\dim \Meg=\FPdim \Meg=m^2$ for some integer $m\in \f{Z}$, where $\dim$ and $\FPdim$ denote the categorical dimension and the Frobenius Perron dimension respectively. The modular category $\Meg$ is integral i.e. the categorical dimensions of all objects are integers. 
\ethm

\bdefn\label{CSdef}
(i) For a minimal idempotent $e\in \DG$, the number $d_e:=\frac{\dim G - n_e}{2}$ is called the functional dimension of $e$.\\
(ii) Let $e$ be a minimal idempotent in $\DG$. We define the $\f{L}$-packet of character sheaves on $G$ associated with the minimal idempotent $e$ to be the following (finite) set 
\beq
CS_e(G):=\{\mbox{isom. classes of simple objects of $\Meg\subset \DG$}\}.
\eeq
(iii) If $e'\in \DG$ is a minimal idempotent which is not isomorphic to $e$, then the sets $CS_e(G)$ and $CS_{e'}(G)$ are disjoint. Let $\widehat{G}$ denote the set of isomorphism classes of minimal idempotents in $\DG$. The set of character sheaves on $G$ is defined to be the disjoint union of all $\f{L}$-packets of character sheaves:
\beq
CS(G):=\coprod\limits_{e\in \widehat{G}} CS_e(G).
\eeq
\edefn

So far we have defined character sheaves using the rather abstract notion of minimal idempotents in $\DG$. We now give a more explicit description of minimal idempotents in $\DG$ using the notion of admissible pairs for $G$. 

We first briefly recall the notion of Serre duality of connected unipotent groups. Our assumption of perfectness is essential in the theory of Serre duality. We refer to \cite[\S A]{Bo1} for a detailed exposition. Let $U$ be a connected unipotent group. Then we have its Serre dual $U^*$ which is a (perfect) commutative (possibly disconnected) unipotent group. Roughly speaking, $U^*$ is the moduli space of central extensions of $U$ by the discrete group scheme $\Q_p/\f{Z}_p$. Let us fix an identification of $\Q_p/\f{Z}_p$ with the group $\mu_{p^\infty}(\Qlcl^\times)$ of the $p$-power-th roots of unity in $\Qlcl^\times$. Then by \cite[Lemma 7.3]{Bo1}, we may think of $U^*$ as the moduli space of multiplicative local systems on $U$.

\brk\label{compidentification}
 We have a natural identification $\Q_p/\f{Z}_p\cong\mu_{p^\infty}(\f{C})$ where we identify $\frac{1}{p^n}\in \Q_p/\f{Z}_p$ with $e^{\frac{2\pi i}{p^n}}$. Since we have already chosen an identification $\Qlcl\cong \f{C}$ in \S\ref{frobal}, this also determines the identification  $\Q_p/\f{Z}_p\cong\mu_{p^\infty}(\Qlcl)$. Henceforth we assume that these two identifications are compatible in this sense.
\erk

\bdefn\label{defadpair} (\cite[Def. 7.4]{Bo1})
Let $G$ be a neutrally unipotent group. Let $(H,\L)$ be a pair consisting of a connected subgroup $H\subset G$ and a multiplicative local system $\L$ on $H$. We say that the pair $(H,\L)$ is admissible for $G$ if the following conditions hold:
\bit
\item[(i)] Let $G'\subset G$ denote the normalizer of the pair $(H,\L)$ (see \cite[\S7.3]{Bo1}) and let $G'^\circ$ denote its neutral connected component. Then $G'^{\circ}/H$ should be commutative.
\item[(ii)] The group morphism $\phi_\L:G'^{\circ}/H \to (G'^{\circ}/H)^*$ that is defined in this situation (see \cite[\S A.13]{Bo1}) should be an isogeny.
\item[(iii)] {\it(Geometric Mackey condition)} For every $g\in G-G'$, we should have $$\L|_{(H\cap ^gH)^\circ}\ncong {  }^g\L|_{(H\cap ^gH)^\circ},$$ where $^gH=gHg^{-1}$ and $^g\L$ is the multiplicative local system on ${}^gH$ obtained from $\L$ by transport of structure. 
\eit
\edefn

\brk
In the situation above, let $e_{H,\L}:=\L\otimes \can_H\in \D_{G'}(G')$. Then $e_{H,\L}\in \D_{G'}(G')$ is in fact a minimal idempotent (see \cite{BD2}). 
\erk

\bdefn
If $(H,\L)$ is an admissible pair for $G$ such that $G'=G$, we say that $(H,\L)$ is a Heisenberg admissible pair  for $G$. Note that in this situation, condition (iii) of the definition is vacuous. In this case the minimal idempotent $e_{H,\L}\in \D_G(G)$ is said to be a Heisenberg idempotent. 
\edefn

\noin In \cite{BD2}, the induction (with compact support) functor $\h{ind}_{G'}^G:\D_{G'}(G')\to \D_G(G)$ is defined for closed subgroups $G'\subset G$. In the following theorem, Boyarchenko and Drinfeld prove that every minimal idempotent in $\D_G(G)$ comes from an admissible pair and in particular from a Heisenberg idempotent on a subgroup by induction.

\bthm\label{BDmain2}
(\cite{BD2})
(i) Let $(H,\L)$ be an admissible pair for a neutrally unipotent group $G$ and let $e_{H,\L}\in \D_{G'}(G')$ be the corresponding Heisenberg idempotent on $G'$ as defined above. Then $f_{H,\L}:=\ind_{G'}^Ge_{H,\L}\in \D_G(G)$ is a minimal idempotent. The functional dimensions of $e_{H,\L}\in \DGp$ and $f_{H,\L}\in \DG$ are related as follows:$$d_{G,f_{H,\L}}=d_{G',e_{H,\L}}+\dim(G/G').$$\\
(ii) In the situation of (i), the induction functor induces an equivalence of modular categories 
$$\indg:\M_{G',e_{H,\L}}\xto{\cong} \M_{G,f_{H,\L}}.$$\\
(iii) Every minimal idempotent $f\in \D_G(G)$ comes from an admissible pair by the procedure described in (i). Hence every minimal idempotent $f\in \D_G(G)$ comes from induction from a Heisenberg idempotent $e$ on some subgroup $G'$.
\ethm

This result reduces many problems about general minimal idempotents to the case of Heisenberg idempotents.

\subsubsection{$\f{L}$-packets of characters of neutrally unipotent groups}
From now on we will need the $\Fq$-structure on $G$ provided by a Frobenius map $F:G\to G$. The functor $F:={F^{-1}}^*:\DG\to \DG$ is an autoequivalence of ribbon $\mathfrak{r}$-categories as we have already mentioned in \S\ref{semidir}. This functor gives an action of $F$ on the set $\widehat{G}$ of minimal idempotents as well as on the set $CS(G)$ of character sheaves. We see that $F$ maps the set $CS_e(G)$ to the set  $CS_{F(e)}(G)$. We are interested in the set $CS(G)^F$ of $F$-stable character sheaves. Clearly we have 
\beq
CS(G)^F=\coprod\limits_{e\in \widehat{G}^F}CS_e(G)^F.
\eeq

The following result from \cite{Bo2} describes the set $\widehat{G}^F$.
\bthm\label{Fstabe} (See \cite[Thm. 2.17]{Bo2}.) Let $e\in \widehat{G}^F$, i.e. $e$ a minimal idempotent in $\DG$ such that $F^*e\cong e$. Then:\\
(i) Consider a pure inner form $G_0^g$ of $G_0$. There exists a unique (up to isomorphism) weak idempotent $e^g_0\in \D_{G^g_0}(G^g_0)$ such that $e$ is obtained from $e^g_0$ by base change. This idempotent $e^g_0$ is in fact a closed idempotent in $\D_{G^g_0}(G^g_0)$.\\
(ii) There exists an inner form $G^g_0$ such that the closed idempotent $e^g_0\in \D_{G^g_0}(G^g_0)$ is obtained from an admissible pair defined over $\Fq$ for the inner form $G^g_0$.
\ethm

\brk
In general in the situation of the theorem, $e_0\in \D_{G_0}(G_0)$ may not be defined by an admissible pair for $G_0$ and passing to an inner form is necessary.
\erk

\bdefn
An idempotent $e_0\in \D_{G_0}(G_0)$ is said to be a geometrically minimal idempotent if the idempotent $e\in \DG$ obtained by base change is a minimal idempotent. It is clear that if $e_0\in \D_{G_0}(G_0)$ is a geometrically minimal idempotent then for each $g\in G$, $e_0^g\in \D_{G^g_0}(G^g_0)$ is also a geometrically minimal idempotent. By Theorem \ref{Fstabe}, there is a natural bijection between the set $\widehat{G}^F$ and the set of isomorphism classes of geometrically minimal idempotents in $\D_{G_0}(G_0)$.
\edefn

\bdefn\label{lpacketchar} (See \cite[Defn. 2.10]{Bo2}.)
Let $e_0\in \D_{G_0}(G_0)$ be a geometrically minimal idempotent. For each $g\in G$, let $\Irrep_{e^g_0}(G_0^g(\Fq))$ be the set of isomorphism classes of irreducible representations of $G_0^g(\Fq)$ in which the idempotent $t_{e_0^g}\in \Fun(G^g_0(\Fq))^{G_0^g(\Fq)}$ acts as the identity. The $\f{L}$-packet of irreducible representations of $G_0$ defined by $e_0$ is defined to be the set
\beq
\Irrep_{e_0}(G_0):=\coprod\limits_{<t>\in H^1(F,G)}{\Irrep_{e^t_0}(G^t_0(\Fq))}\subset \Irrep(G_0).
\eeq
By taking the characters of representations, we consider the set $\Irrep_{e_0}(G_0)$ as a subset of the Frobenius algebra $\FunG$. Then the set $\Irrep_{e_0}(G_0)\subset \FunG$ forms an orthonormal basis for the subspace $T_{e_0}\FunG\subset \FunG$  corresponding to the idempotent $T_{e_0}\in \FunG$.
\edefn

\subsubsection{$F$-stable character sheaves on neutrally unipotent groups}
Let $e\in \widehat{G}^F$. Then the equivalence $F:\DG\to\DG$ induces an autoequivalence of the modular category $F:\M_{G,e}\to \Meg$ and a permutation of the $\f{L}$-packet $CS_e(G)$ of character sheaves associated with $e$. The following is proved in \cite{Bo2}.
\bthm\label{Bo2main}(See \cite[Thm. 2.17]{Bo2}.)
(i) Let $C\in CS_e(G)^F$ and let $\psi:F^*C\to C$ be any Weil structure. Then $T_{C,\psi}\in T_{e_0}\FunG$.\\
(ii)  For each $C\in CS_e(G)^F$ choose a Weil structure $\psi_C$ so that $<T_{C,\psi_C},T_{C,\psi_C}>=1.$ Then the finite set $\{T_{C,\psi_C}\}_{C\in CS_e(G)^F}$ is an orthonormal basis of $T_{e_0}\FunG$ (with respect to the Hermitian inner product $<\cdot,\cdot>$).
\ethm

This theorem proves that the trace functions $\{T_{C,\psi}\}_{C\in CS(G)^F}$ (suitably normalized) also form an orthonormal basis for $\FunG$, that the matrix relating this basis to the basis (formed by characters of) $\Irrep(G_0)$ is block diagonal (see also Definition \ref{lpacketchar}) and that these blocks correspond to $F$-stable minimal idempotents $e\in \DG$ or equivalently $F$-stable $\f{L}$-packets of $G$.

\subsubsection{Main results and conjectures}\label{mrc}
As before, let $G$ be a neutrally unipotent group equipped with an $\Fq$-structure. Let $e\in\widehat{G}^F$ and let $e_0\in \DGn$ be the corresponding geometrically minimal idempotent. Note that the subspace $T_{e_0}\FunG\subset \FunG$ is an algebra with unit $T_{e_0}$. As seen before, the set $\Irrep_{e_0}(G_0)$ is an orthonormal basis of $T_{e_0}\FunG$ with respect to the bilinear form $(\cdot,\cdot)$ (as well as the Hermitian form $<\cdot,\cdot>$). Hence the restriction of the bilinear form to $T_{e_0}\FunG$ is non-degenerate and hence $T_{e_0}\FunG$ is a Frobenius algebra. (More generally, it is easy to see that if $A$ is a Frobenius algebra and $e\in A$ is a central idempotent, then $eA$ is again a Frobenius algebra.) Now, the suitably normalized trace of Frobenius functions $\{T_{C,\psi_C}\}_{C\in CS_e(G)^F}$ form another orthonormal basis of $T_{e_0}\FunG$ with respect to the Hermitian inner product $<\cdot,\cdot>$. Our goal is to describe the unitary matrix $\t{S}^{G_0,e_0}$ which relates these two orthonormal bases. Since the basis $\Irrep_{e_0}(G_0)$ is orthonormal, this matrix is obtained by taking inner products between the bases: 
\beq\label{tsmatrix}
(\t{S}^{G_0,e_0}_{C,W})_{C\in CS_e(G)^F, W\in \Irrep_{e_0}(G_0)}=(<T_{C,\psi_C},\chi_W>)_{C\in CS_e(G)^F, W\in \Irrep_{e_0}(G_0)}.
\eeq
First we give another characterization of the set $\Irrep_{e_0}(G_0)$ of irreducible representations of the various inner forms lying in the $\f{L}$-packet defined by $e_0$.
\bthm\label{main2}
Let $\M_{GF,e}\subset e\D_G(GF)$ be the full subcategory formed by objects whose underlying $\Qlcl$-complex is a perverse sheaf shifted by $n_e$. (Recall the definition of $n_e, d_e$ from \S\ref{csnug}.)\\ 
(i) Then $\M_{GF,e}$ has a natural structure of an invertible $\M_{G,e}$-module category equipped with a natural normalized (see (\ref{normtr})) $\Meg$-module trace  $\tr_{F,e}$ and we have $D^b(\M_{GF,e})\cong e\D_G(GF)$. \\
(ii)  Let $W\in \Irrep_{e_0}(G_0)$ i.e. $W\in \Irrep_{e_0^t}(\Gtq)$ for some $t\in G$. Then by (\ref{Fconjorbit}), $W$ corresponds to a $G$-equivariant local system $W_{loc}\in \D_G(GF)$ supported on the $G$-conjugacy class of $tF\subset GF$. Then in fact $W_{loc}\in e\D_G(GF)$ and $M_W:=W_{loc}[\dim G+n_e]\in \M_{GF,e}$ is a simple object.\\
(iii) There is a natural bijection between the set $\Irrep_{e_0}(G_0)$ and the set $\O_{\M_{GF,e}}$ of (isomorphism classes of) simple objects of $\M_{GF,e}$. This bijection is induced by the map 
$$\Irrep_{e_0}(G_0)\ni W\mapsto M_W=W_{loc}[\dim G+n_e]\in \O_{\M_{GF,e}}$$ 
from (ii). 
\ethm

\brk\label{simplebasis}
By taking characters ($\Irrep(G_0)\ni W\mapsto \chi_W\in \FunG$), we have identified the set $\Irrep(G_0)$ as a subset of $\FunG$. We see that $\chi_W=(-1)^{\dim G+n_e}\cdot\chi_{M_W}=(-1)^{2d_e}\cdot\chi_{M_W}$.
\erk

Using Theorem \ref{Fstabe}(ii) we will first reduce Theorem \ref{main2} to the case of Heisenberg idempotents in \S\ref{redofmain2}. We then prove the theorem in the Heisenberg case in \S\ref{pfofmain2}.

Combining the Remark \ref{simplebasis} with (\ref{tsmatrix}) and Theorem \ref{ipf} we obtain:
\bcor\label{cor1}
The entries of the matrix $\t{S}^{G_0,e_0}$ are 
$$\t{S}^{G_0,e_0}_{C,W}=(-1)^{2d_e}\cdot\tr_F(\g_{C,\psi_C,M_W})$$
where $\g_{C,\psi_C,M_W}$ is the automorphism of $C\ast M_W\in \D_G(GF)$ defined in \S\ref{secipf} and $\tr_F$ is the  trace on $\D_G(GF)$ defined in \S\ref{traceondgf}.
\ecor

Now the full subcategory $\M_{GF,e}\subset \D_G(GF)$ has the natural module trace $\tr_{F,e}$ from Theorem \ref{main2}(i). We would like to compare this module trace with the trace $\tr_F$ on $\D_G(GF)$ restricted to $\M_{GF,e}$.

If $G$ is a neutrally unipotent group, it is known that the modular category $\M_{G,e}$ is integral. In fact, in a subsequent work (\cite{De4}), we prove that $\Meg$ is in fact positive integral, at least for $G$ defined over $\Fqcl$ as in this paper. Hence for any  $C\in \M_{G,e}$ we have $\dim(C)=\FPdim(C)\in \f{Z}_{>0}$ and $\dim(\Meg)=\FPdim(\Meg)\in \Z_{>0}$. Now let $\M_{GF,e}^+$ denote the category which is same as $\M_{GF,e}$ as a module category but which is equipped with the positive module trace (normalized according to (\ref{normtr})) $\tr^+_{F,e}$, i.e. any object in $\M_{GF,e}^+$ has positive (with respect to our chosen identification $\Qlcl\cong \f{C}$)  categorical dimension (which may be non-integral). Recall from \cite[\S2.5]{ENO} the notion of Frobenius-Perron dimension in module categories. Then for any $M\in \M_{GF,e}^+$ we have (cf. \cite[Prop. 2.2]{ENO} and (\ref{normtr})) $\dim(M)=\FPdim(M)$. Note that we have $\tr^+_{F,e}=\pm\tr_{F,e}$

The next result compares the $\Meg$-module trace $\tr_{F,e}^+$ on $\M_{GF,e}^+$ with the trace $\tr_F$ restricted to $\M_{GF,e}\subset \D_G(GF)$ and gives an interpretation of the matrix $\t{S}^{G_0,e_0}$ defined in (\ref{tsmatrix}) as a crossed $S$-matrix up to scaling.
\bthm\label{main3}
(i) On the subcategory $\M_{GF,e}\subset \D_G(GF)$ we have the following equality:
\beq
\tr_{F,e}^+=(-1)^{2d_e}\cdot\frac{q^{\dim G}\cdot\sqrt{\dim \Meg}}{q^{d_e}}\cdot \tr_F.
\eeq
(ii) The matrix $\t{S}^{G_0,e_0}$ relating the two orthonormal bases $\{T_{C,\psi_C}\}_{C\in CS_e(G)^F}$ and $\Irrep_{e_0}(G_0)$ of $T_{e_0}\FunG$  is equal to the crossed $S$-matrix corresponding to the modular category $\Meg$ and the $\Meg$-module category $\M^+_{GF,e}$  up to a scaling:
\beq\label{smat+}
\t{S}^{G_0,e_0}=\frac{q^{d_e}}{q^{\dim G}\cdot\sqrt{\dim \M_{G,e}}}\cdot S^{\Meg, \M_{GF,e}^+}=\pm\frac{q^{d_e}}{q^{\dim G}\cdot\sqrt{\dim \M_{G,e}}}\cdot S^{\Meg, \M_{GF,e}}.
\eeq
\ethm

\brk
By Theorem \ref{BDmain}(v), the categorical dimension $\dim \Meg$ is the square of an integer and we extract the positive integral square root in the theorem above. Also $d_e\in \frac{1}{2}\f{Z}$ and we again extract the positive square root in the formulas above if $d_e\in \frac{1}{2}+\f{Z}$ according to our chosen identification $\Qlcl\cong \f{C}$.
\erk

Once again we will first reduce Theorem \ref{main3} to the case of Heisenberg idempotents in \S\ref{redofmain3} and prove it in the Heisenberg case in \S\ref{pfofmain3}.

As a corollary of the above results, we obtain the following result about dimensions of irreducible representations (see also Corollary \ref{c:sqdim} for an alternative approach):
\bthm\label{t:corthm}
Let $G$ be a neutrally unipotent group and suppose that $e\in \hat{G}^F$. Suppose that $W\in \Irrep_{e_0}(G_0)$ is an irreducible representation of the pure inner form $\Gtq$. Let $M_W\in \O_{\M_{GF,e}}$ be the corresponding simple object as defined by Theorem \ref{main2}(iii). Then we have 
\beq\label{e:dimw}
\dim(W)=\frac{\dim_{\M^+_{GF,e}}(M_W)\cdot q^{d_e}\cdot |\Pi_0(G_0^t)(\Fq)|}{\sqrt{\dim \Meg}}=\frac{\dim_{\M^+_{GF,e}}(M_W)\cdot q^{d_e}\cdot |G_0^t(\Fq)|}{q^{\dim G}\cdot\sqrt{\dim \Meg}},
\eeq
where $\dim_{\M^+_{GF,e}}(M_W)$ denotes the categorical dimension of $M_W$ in $\M^+_{GF,e}$. Furthermore, we have
\beq\label{e:sumsq}
\sum\limits_{W\in \Irrep_{e_0}(G_0)}\frac{\dim(W)^2}{|\Gtq|^2}=\frac{1}{q^{\dim G+n_e}}=\frac{q^{2d_e}}{q^{2\dim G}}.
\eeq
In particular if $G$ is connected, then
\beq\label{e:sumsqconn}
\sum\limits_{W\in \Irrep_{e_0}(G_0(\Fq))}{\dim(W)^2}={q^{2d_e}}.
\eeq
For any neutrally unipotent group $G$ equipped with an $\Fq$-Frobenius $F:G\rar{}G$, we have
\beq\label{e:fundimsq}
\sum\limits_{e\in \hat{G}^F}q^{2d_e}=q^{\dim G}.
\eeq
\ethm
\bpf
The statement (\ref{e:dimw}) follows directly from Theorem \ref{main3}(i) and the definition of the simple object $M_W=W_{loc}[\dim G+n_e]$. The statement (\ref{e:sumsq}) follows from (\ref{e:dimw}) using the fact that the trace $\tr^+_{F,e}$ was normalized according to (\ref{normtr}). If $G$ is connected, then there is only one pure inner form $G_0$ and $G_0(\Fq)=q^{\dim G}$, which proves (\ref{e:sumsqconn}).

Finally by using (\ref{e:sumsq}) for each $e\in \hat{G}^F$ and then taking the sum, we obtain
\beq
\sum\limits_{e\in \hat{G}^F}q^{2d_e}=q^{2\dim G}\cdot\sum\limits_{W\in \Irrep(G_0)}\frac{\dim(W)^2}{|\Gtq|^2}.
\eeq
\beq
=q^{2\dim G}\cdot\sum\limits_{\substack{<t>\\\in H^1(F,G)}}\frac{1}{|\Gtq|^2}\cdot\sum\limits_{W\in \Irrep(G^t_0(\Fq))}{\dim(W)^2}.
\eeq
\beq
=q^{2\dim G}\cdot\sum\limits_{<t>\in H^1(F,G)}\frac{1}{|\Gtq|}
\eeq
\beq
=q^{\dim G}\cdot\sum\limits_{<t>\in H^1(F,\Pi_0(G))}\frac{1}{|\Pi_0(G_0^t)(\Fq)|}
\eeq
\beq
=q^{\dim G}.
\eeq
\epf

Finally we state the following conjecture.
\bconj\label{conjmain}
(i) If $G$ is a neutrally unipotent group over any algebraically closed field $\k$ of characteristic $p$ and $e\in \DG$ is any minimal idempotent, then the modular category $\Meg$ is \emph{positive} integral. (This is proved for $\k=\Fqcl$ in \cite{De4}.)\\ 
(ii) Now assume as before that $\k=\Fqcl$ we have an $\Fq$-structure on $G$ and that $e\in \DG$ is an $F$-stable minimal idempotent. Then we can choose an identification $\Qlcl\cong \f{C}$ (compatible with any chosen identification $\Q_p/\f{Z}_p\cong\mu_{p^\infty}(\Qlcl)$ in the sense of Remark \ref{compidentification}) in such a way that the $\Meg$-module trace $\tr_{F,e}$ on $\M_{GF,e}$ is positive, i.e. we can choose an identification $\Qlcl\cong \f{C}$ so that $\M_{GF,e}^+=\M_{GF,e}$.
\econj

\subsection{Examples}\label{s:examples}
We will study two examples which illustrate the relationship between character sheaves and characters as described in this paper and see some interesting examples of crossed $S$-matrices and nontrivial $\f{L}$-packets.

\subsubsection{Finite groups}
We first consider the simple yet very instructive examples of zero dimensional algebraic groups. In other words, let $G$ be any finite group considered as a zero dimensional algebraic group over $\k$. An $\Fq$-structure on $G$ is determined by any automorphism $F:G\rar{\cong}G$ of the group. The category $\Sh_G(G)$ of $G$-equivariant $\Qlcl$-sheaves is precisely the modular category which is known as the Drinfeld double of the finite group $G$. In this case $\DG=D^b(\Sh_G(G))$ has only one minimal idempotent namely the unit object and hence we have a unique $\f{L}$-packet. In this case the finite groups that we are interested in are the groups $G^{\h{ad}(t)\circ F}$ parametrized by $<t>\in H^1(F,G)$. In this case 
\beq
CS(G)=\{(x,\rho)|x\in G, \rho\in\Irrep(C_G(x))\}/G\mbox{-conj}. 
\eeq
Given an isomorphism $\psi$ between the simple objects corresponding to $F^*(x,\rho)=(F^{-1}(x), \rho\circ F)$ and $(x,\rho)$ we obtain the trace of Frobenius function $T_{x,\rho,\psi}\in \FunG=\Fun_G(R)$.
On the other hand, we have the set of irreducible representations of all pure inner forms
\beq
\Irrep(G,F)=\{(tF, \xi)|tF\in GF, \xi\in \Irrep(G^{\h{ad}(t)\circ F})\}/G\mbox{-conj}.
\eeq
Given $(tF,\xi)\in \Irrep(G,F)$ we have its character $\chi_{tF,\chi}\in \FunG=\Fun_G(R)$ supported on the pure inner form $G^{\h{ad}(t)\circ F}$.
In this case the character sheaves and irreducible characters are related by the $CS(G)^F\times \Irrep(G,F)$ crossed $S$-matrix $S(G,F)$ whose entries are given by
\beq
S(G,F)_{x,\rho,\psi,tF,\xi}=\frac{1}{|C_G(x)|\cdot|G^{\h{ad}(t)\circ F}|}\sum\limits_{h:hxh^{-1}\in G^{\h{ad}(t)\circ F}}T_{x,\rho,\psi}({hxh^{-1}},t)\cdot\chi_{tF,\xi}(hx^{-1}h^{-1},t)
\eeq
\beq
=\frac{1}{|C_G(x)|\cdot|G^{\h{ad}(t)\circ F}|}\sum\limits_{h:hxh^{-1}\in G^{\h{ad}(t)\circ F}}T_{x,\rho,\psi}({hxh^{-1}},t)\cdot\overline{\chi_{tF,\xi}(hxh^{-1},t)}.
\eeq
Such crossed $S$-matrices associated with any automorphism of a finite group were also studied by Lusztig in his work on character sheaves on reductive groups under the name ``nonabelian Fourier transform''.

\subsubsection{The fake Heisenberg group}
Let $U$ be the fake Heisenberg group, namely $U=\G_a\times \G_a$ as a scheme with multiplication defined by $(x,a)\cdot (y,b)=(x+y,a+b+xy^p)$. Equivalently $U=\left\{\left(\begin{array}{ccc} 1 & x & a\\ & 1 & x^p\\ & & 1\end{array}\right)\right\}$. Consider the subgroup $\G_a\cong H=\{(0,a)\}\subset U$. We have the exact sequence
\beq
0\to H\to U\rar{\pi} \G_a\rar{} 0.
\eeq

We fix a nontrivial character $\chi:\Fp\to \Qlcl^\times$. This allows us to identify $\G_a$ with its Serre dual $\G_a^*$. We can then label the multiplicative local systems on $\G_a$ as $\L_s$ where $s\in \G_a$. Then we have the following admissible pairs and minimal idempotents for $U$:
\bit
\item Pairs of the form $(U,\pi^*\L_s)$ for any $s\in \G_a$. The corresponding minimal idempotents are $e_{U,\pi^*\L_s}$. 
\item Pairs of the form $(H,\L_s)$ for any nonzero $s\in \G_a$. The corresponding minimal idempotents are $e_{H,\L_s}$.
\eit

Now let $q=p^m$ and let $F:U\to U$ denote the $q$-th power Frobenius map. We will now describe the character theory of the finite group $U(\Fq)=U^F$ in terms of character sheaves.

\brk
The admissible pairs of type $(U,\pi^*\L_s)$ and their associated irreducible representations are rather straightforward to study. These precisely account for the $q$ 1-dimensional representations of $U(\Fq)$ that come from the quotient $U(\Fq)\onto \G_a(\Fq)=\Fq$. Hence let us restrict our attention to the admissible pairs of the type $(H,\L_s)$ where $s\neq 1$.
\erk

Let us begin by studying all the character sheaves associated with a minimal idempotent of the form $e_s:=e_{H,\L_s}$. Note that in this case the skew-symmetric isogeny $\phi_{\L_s}:\G_a=U/H\to \G_a^*=\G_a$ from Definition \ref{defadpair}(ii) is given by
\beq
y \mapsto sy^p-s^{1/p}y^{1/p}.
\eeq
Let $K_s$ be the kernel of the above isogeny and let $H\subset \t{K}_s\subset U$ be the subgroup such that $\tK_s/H=K_s$. Then by \cite[\S4.3]{De1} we have 
\beq
CS_{e_s}(U)=\{e_s\ast \delta_k|k\in K_s\},
\eeq
where $\delta_k$ is the delta sheaf supported at $(k,0)\in U$, i.e. the character sheaves associated with $e_s$ are its translates by $K_{s}$. Note that we have
\beq
K_s=\{k\in\k|s^pk^{p^2}=sk\}=\{k\in\k|sk^{p+1}\in \Fp\}.
\eeq
By results of \cite{De1}, the modular category $\M_{U,e_s}$ is the one associated with the metric group $(K_s,\theta_s)$ where the metric $\theta_s:K_s\to \Qlcl^\times$ is defined by $\theta_s(k)=\chi(sk^{p+1})$, where $\chi$ is our chosen non-trivial character of $\Fp$. (We refer to \cite[\S A]{Da} for details.)

Note that the idempotent $e_s$ is $F$-stable if and only if $s\in \Fq^\times$. Let us consider one such $s\in \Fq^\times$. Then we have the non-trivial character $\chi_s:H(\Fq)=\Fq\to \Qlcl^\times$ defined by $\chi_s(a)=\chi\circ \h{tr}_{\Fq|\Fp}(sa)$. In other words, $\chi_s$ is the ``trace of Frobenius'' function associated with the $F$-stable multiplicative local system $\L_s$. 

Let us now describe the $\f{L}$-packet 
\beq
\Irrep_{{e_s}}(U(\Fq))=\{\rho\in \Irrep(U(\Fq))|\rho(h) \mbox{ acts by the scalar $\chi_s(h)$ for each $h\in H(\Fq)$}\}
\eeq 
of irreducible characters associated with the $F$-stable minimal idempotent $e_s$ and its relationship with the set  
\beq
CS_{e_s}(U)^F=\{e_s\ast\delta_k|k\in K_s^F\}=\{e_s\ast\delta_k|k\in K_s, k^q=k\}
\eeq
of $F$-stable character sheaves in the associated $\f{L}$-packet. Consider the following skewsymmetric pairing
 defined using the commutator map and the character $\chi_s$ of $H(\Fq)$
\beq
(U/H)(\Fq) \times (U/H)(\Fq)\xto{[\cdot,\cdot]}H(\Fq)\xto{\chi_s}\Qlcl^\times,
\eeq
\beq
(x,y)\mapsto \chi_s(xy^p-x^py).
\eeq
By the definition of $K_s$ and the relationship between multiplicative local systems on $\G_a$ and characters of $\Fq$, we see that the kernel of the above skewsymmetric pairing is equal to $K_s(\Fq)=K_s^F$. The induced skewsymmetric pairing 
\beq
U(\Fq)/\t{K}_s(\Fq)\times U(\Fq)/\t{K}_s(\Fq) \rar{}\Qlcl^\times
\eeq 
is nondegenerate. Let $\t{K}_s(\Fq)\subset L\subset U(\Fq)$ be such that $L/\t{K_s}(\Fq)\subset U(\Fq)/\t{K}_s(\Fq)$ is a Lagrangian. Now all the irreducible representations in the $\f{L}$-packet associated with $e_s$ can be obtained in the following way:
\bit
\item Extend the character $\chi_s$ to a character $\t{\chi}_s:\t{K}_s(\Fq)\rar{}\Qlcl^\times$. The set of extensions is a $K_s^*$-torsor, where $K_s(\Fq)^*$ is the Pontryagin dual of $K_s(\Fq)$.
\item Extend $\t\chi_s$ further to a character $\t\chi'_s:L\to \Qlcl^\times$. This is possible since $L/\t{K}_s(\Fq)\subset U(\Fq)/\t{K}_s(\Fq)$ is isotropic. (The choice of this extension will eventually not matter.)
\item Form the induced representation $\rho:=\Ind_L^{U(\Fq)}\t\chi'_s$. Then we can check that $\rho\in \Irrep_{e_s}(U(\Fq))$ and that it only depends on the extension $\t\chi_s$ and not its further extension $\t\chi'_s$. It is easy to check that the character of $\rho$ is given by
\beq\label{e:esirrep}
\chi_\rho(y,b)=\begin{cases}
\frac{q^2}{|L|}\t\chi_s(y,b)=\frac{q^2}{|L|}\t\chi_s(y,0)\chi_s(b) & \text{if $y\in K_s(\Fq)$}\\
0 &\text{otherwise.}
\end{cases}
\eeq
\eit

Let $\omega:K_s(\Fq)\rar{}\Qlcl^\times$ be any character. Let us first suppose that $q$ is odd.  In this case it is easy to check that we can construct explicit extensions $\t{\chi}^\omega_s:\t{K}_s(\Fq)\rar{}\Qlcl^\times$ of $\chi_s$ parametrized by $\omega\in K_s(\Fq)^*$ and defined by $\t\chi^\omega_s(k,b):=\omega(k)\cdot\chi_s(b-\frac{k^{p+1}}{2})$ and that these give us all extensions. If $q$ is a power of $2$, then we choose one extension $\t\chi_s:\t{K}_s(\Fq)\rar{}\Qlcl^\times$ of $\chi_s$ and use it to obtain all extensions as $\t\chi^\omega_s:\t{K}_s(\Fq)\rar{}\Qlcl^\times$ defined by $\t\chi^\omega_s(k,b):=\omega(k)\cdot \t\chi_s(k,b)$. Hence in either case, we obtain a parametrization of the set $\Irrep_{e_s}(U(\Fq))$ by $K_s(\Fq)^*$. On the other hand, for the trace functions associated with the set $CS_{e_s}(U)^F$  we choose the functions $T_k:U(\Fq)\rar{}\Qlcl$ for $k\in K_s(\Fq)$ defined by 
\beq
T_k(y,b)=\delta_{ky}\cdot\t\chi_s(y,b).
\eeq With these choices (see also (\ref{e:esirrep})), the transition matrix between character sheaves and characters is the $K_s(\Fq)\times K_s(\Fq)^*$ matrix $S(e_s,F)$ with entries defined by
\beq
S(e_s,F)_{k,\omega}=\frac{q^2}{|L|}\omega(k).
\eeq
Also note that we have the metric group $(K_s,\theta_s)$ and an automorphism $F$ of the metric group defined by the Frobenius. This allows us to identify $K_s\cong K_s^*$. It is easy to check that we have $K_s(\Fq)^\perp=(K_s^F)^\perp=(1-F)K_s$. Hence the metric allows us to identify $K_s(\Fq)^*\cong K_s/(1-F)K_s$. This allows us to describe the matrices $S(e_s,F)$ using the $S$-matrix of the modular category $\M_{U,e_s}$ which is nothing but the nondegenerate bimultiplicative map $K_s\times K_s\rar{}\Qlcl^\times$ obtained using the metric $\theta_s$.

We now break our analysis into two cases:\\
(i) $q=p^{2r+1}$: Let $s\in \Fq^\times$. In this case $|K_s(\Fq)|=p$ and the irreducible representations in $\Irrep_{e_s}(U(\Fq))$ are $p^r$-dimensional.\\
(ii) $q=p^{2r}$: If $s\in \Fq^\times$ is such that $s^{\frac{q-1}{q+1}}=1$, then $|K_s(\Fq)|=p^2$ and the irreducible representations in $\Irrep_{e_s}(U(\Fq))$ are $p^{r-1}$-dimensional. In the case the transition matrix is equal (up to normalization) to the S-matrix of the modular category $\M_{U,e_s}$. If $s\in \Fq^\times$ is such that $s^{\frac{q-1}{q+1}}\neq 1$, then $|K_s(\Fq)|=1$ and the unique irreducible representation in $\Irrep_{e_s}(U(\Fq))$ is $p^{r}$-dimensional.

\section{Proof of Theorem \ref{ipf}}\label{pfofipf}
In this section we prove Theorem \ref{ipf} for an arbitrary algebraic group $G$. The proof is an application of the Grothendieck-Lefschetz trace formula. For notational convenience we use the integral symbol ($\int$) to denote pushforward with compact supports. If $\E\in \D(X)$, we use $\int\limits_{X}\E$ to denote $R\Gamma_c(X,\E)$.

The first equality $(T_{C,\psi},\chi_M)=<T_{C,\psi},\chi_M>$ is clear since $\bar{\chi_M(h,t)}=\chi_M(h^{-1},t)$ for each $(h,t)\in R$. Next we compute the stalk of the automorphism $\g_{C,\psi,M}$ at a point $t\in G$. The stalk
$$\g_{C,\psi,M}(t):(C\ast M)_t\to (C\ast M)_t$$ is given by
$$(C\ast M)_t=\int\limits_{h_1h_2=t} C_{h_1}\otimes M_{h_2} $$
$$\xto{\left(\psi(F^{-1}(h_2^{-1}h_1h_2))\circ\phi_C(h_2^{-1},h_1)\right)\otimes \phi_M(F^{-1}(h_2^{-1}h_1^{-1}h_2), h_2)}\int\limits_{h_1h_2=t} C_{F^{-1}(h_2^{-1}h_1h_2)}\otimes M_{F^{-1}(h_2^{-1}h_1^{-1}h_2)h_1h_2}$$

Consider the automorphism of the antidiagonal $\bar{\Delta}_t:=\{(h_1,h_2)\in G\times G|h_1h_2=t\}$ defined by 
$$(h_1,h_2)=(h_1,h_1^{-1}t)\mapsto ({F^{-1}(h_2^{-1}h_1h_2)}, {F^{-1}(h_2^{-1}h^{-1}_1h_2)}h_1h_2)=({F^{-1}(t^{-1}h_1t)}, {F^{-1}(t^{-1}h^{-1}_1t)}t).$$ The fixed point set of this automorphism is precisely the finite set $\bar{\Delta}_t\cap R$. The inverse of this automorphism corresponds to the Frobenius automorphism $ad(t)\circ F$ of $G$ under the identification $\bar{\Delta}_t\xto{p_1}G$. Hence by the Grothendieck-Lefschetz trace formula (trace of the induced map on cohomology equals the sum of traces of the stalks of the map over all fixed points, see also \cite[Lemma 4.4(iii)]{Bo2}) we deduce that 
\beq
\tr(\g_{C,\psi,M}(t))=\sum\limits_{\substack{(h_1,h_2)\in R\\h_1h_2=t}}T_{C,\psi}(h_1,h_2)\chi_M(h_1^{-1},h_2).
\eeq

Now $C\ast M$ is an object of $\D_G^F(G)\cong \bigoplus\limits_{\substack{<t> \in \\ H^1(F,G)}}D^b(\h{Rep}(G_0^t(\Fq)))$. Hence by definition (\ref{deftr}), 
\beq
\tr_F(\g_{C,\psi,M})=\sum\limits_{\substack{<t> \in \\ H^1(F,G)}}\frac{\tr(\g_{C,\psi,M}(t))}{|\Gtq|}
\eeq
\beq
=\sum\limits_{\substack{<t> \in \\ H^1(F,G)}}\sum\limits_{\substack{(h_1,h_2)\in R\\h_1h_2=t}}\frac{T_{C,\psi}(h_1,h_2)\chi_M(h_1^{-1},h_2)}{|\Gtq|}
\eeq
\beq
=\sum\limits_{\substack{<t> \in \\ H^1(F,G)}}\sum\limits_{\substack{(h,t)\in R}}\frac{T_{C,\psi}(h,h^{-1}t)\chi_M(h^{-1},h^{-1}t)}{|\Gtq|}
\eeq
\beq
=\sum\limits_{\substack{<t> \in \\ H^1(F,G)}}\sum\limits_{\substack{h\in \Gtq}}\frac{T_{C,\psi}(h,h^{-1}t)\bar{\chi_M(h,h^{-1}t)}}{|Z_{\Gtq}(h)|\cdot |\h{Conj}_{\Gtq}(h)|}
\eeq
\beq
=\sum\limits_{\substack{<(h,t)>\in G\backslash R}}\frac{T_{C,\psi}(h,h^{-1}t)\bar{\chi_M(h,h^{-1}t)}}{|Z_{\Gtq}(h)|}
\eeq
\beq
=\sum\limits_{\substack{<(h,t)>\in G\backslash R}}\frac{T_{C,\psi}(h,h^{-1}t)\bar{\chi_M(h,h^{-1}t)}}{|\Stab_G(h,t)|}
\eeq
\beq
=<T_{C,\psi},\chi_M>
\eeq
as desired. Here $\h{Conj}_{\Gtq}(h)$ denotes the conjugacy class of $h$ in $\Gtq$.

\section{Induction functors for braided crossed categories}

\subsection{Braided crossed categories associated with group extensions}\label{brcrgrex}

Let $G$ be a perfect quasi-algebraic group. Let $\Gamma$ be a discrete (possibly infinite) group and consider an extension
\beq
0\to G\to \t{G}\to \Gamma \to 0.
\eeq

Consider the category 
$$\D_G(\t{G}):=\bigoplus\limits_{G\tg\in \Gamma=\t{G}/G}\D_G(G\tg).$$ 
This is a $\Gamma$-graded monoidal category. For each $\g \in \Gamma$, let $\tg\in \t{G}$ be a lift. Consider the conjugation by $\tg^{-1}$ automorphism $\tg^{-1}:\t{G}\to \t{G}$ and define the functor $\tg:\DGt\to \DGt$ as pullback by the conjugation automorphism $\tg^{-1}$. This defines a monoidal action of $\Gamma$ on $\D_G(\t{G})$. 

We see that we have the following crossed braiding isomorphisms:
\blem
Let $\tg\in \t{G}$. Let $M\in \DGt$ and $N\in \D(G\tg)$. Then we have a natural isomorphism 
\beq
\beta_{N,M}: N\ast M \xto{\cong} \tg(M)\ast N \hbox{ in $\D(\t{G})$}.
\eeq
\elem

In fact we can define the structure of a braided $\Gamma$-crossed category on $\D_G(\t{G})$. It is also an $\mathfrak{r}$-category with the duality functor given by $\f{D}^-=\f{D}\circ \iota^*=\iota^*\circ \f{D}$, where $\f{D}:\D_G(\t{G})\to \D_G(\t{G})$ is the Verdier duality functor and $\iota:G\to G$ is the inversion map. Moreover it has a natural pivotal structure coming from the natural monoidal isomorphism $Id\cong (\f{D}^-)^2$ and the monoidal action of $\Gamma$ preserves this pivotal structure. Hence $\D_G(\t{G})$ is a braided $\Gamma$-crossed pivotal $\mathfrak{r}$-category.

\subsection{Induction functors}\label{secindfun}
In this section we define induction functors and study their properties in the setting of braided crossed categories which is slightly more general than the setting in \cite[\S4]{De3}. However the proofs from {\it loc. cit.} and \cite{BD2} can be readily adapted to our current set up. 

Consider an extension $0\to G\to\t{G}\to \Gamma\to 0$ as in \S\ref{brcrgrex}.  Let $\t{G}'\subset \t{G}$ be a subgroup that surjects onto $\Gamma$. Let $G':=G\cap \t{G}'$. Then we have the extension
\beq
0\to G'\to \t{G}'\to \Gamma \to 0.
\eeq

We can define the induction with compact supports functor
\beq
\ind_{G'}^G:\D_{G'}(\t{G'})\to \D_G(\t{G})
\eeq
as the composition $\D_{G'}(\t{G'})\to \D_{G'}(\t{G})\xto{\av_{G/{G'}}} \D_G(\t{G})$. By \cite[Prop. 3.4]{De3}, we have the structure of a weak semigroupal functor on $\ind_{G'}^G$. Moreover we can show that it is compatible with the $\Gamma$-actions and the braided $\Gamma$-crossed structure as well as the pivotal $\r$-category structure.

\bdefn
Let $G'\subset G$ be any closed subgroup in an algebraic group $G$. Let $e\in\DGp$ be any idempotent. We say that $e$ satisfies the Mackey condition with respect to $G$ if for every $g\in G-G'$ we have $e\ast\delta_g\ast e=0$, where $\delta_g$ is the delta sheaf supported at $g$.
\edefn

\brk
Note that the condition (iii) from Definition \ref{defadpair} of admissible pairs is equivalent to the Mackey condition above for the closed idempotent $e_{H,\L}\in \DGp$ associated with the pair $(H,\L)$.
\erk

Now let $e\in \D_{G'}(G')\subset \D_{G'}(\t{G}')$ be an idempotent satisfying the Mackey condition with respect to $G$ such that $\tg(e)\cong e$ for each $\tg\in \t{G}'$. 
\bprop\label{indmackey} (See \cite[Prop. 4.3]{De3}.)
In the situation above we have:\\
(i) The induced object $f:=\ind_{G'}^G e$ is an idempotent in $\D_G(G)\subset \D_G(\t{G})$ such that $\tg(f)\cong f$ for each $\tg\in \t{G}$.\\
(ii) If $M\in e\D_{G'}{(\t{G}')}$, then $\ind_{G'}^G M\in f\D_G(\t{G})$.\\
(iii) If $N\in \D(G)$, then $e\ast N_{G\backslash G'}\ast e=0$ and hence $e\ast N\ast e\cong e\ast N_{G'}\ast e$. Hence we can consider $e\ast N\ast e$ as an object of $\D(G')$.\\
(iv) Let $\tg\in \t{G}'$ and $N\in \D_G(G\tg)$. Then $e\ast N\cong e\ast N\ast e\cong e\ast N\ast \delta_{{\tg}^{-1}}\ast e\ast \delta_{\tg}$ is supported on $G'\tg\subset \t{G}'$. Hence for each $N\in \DGt$ we may consider $e\ast N\cong e\ast N|_{\t{G}'}$ as objects of $e\DGtp$.\\
(v) For each $N\in \DGt$, there is an isomorphism
\beq
f\ast N \stackrel{\cong}{\longrightarrow} \indg (e\ast N)
\eeq
functorial with respect to $N$.
\item[(vi)] For each $M\in e\DGtp$ we obtain functorial isomorphisms 
\beq
e\ast (\indg M) \stackrel{\cong}{\longrightarrow} e\ast M \hbox{ in } \D_{G'}(\t{G})
\eeq
\beq
e\ast (\Resg\circ\indg M) \stackrel{\cong}{\longrightarrow} e\ast M \hbox{ in } \DGtp,
\eeq
by taking the convolution of $e$ with the canonical morphism $\indg M\to M$ in $\D_{G'}(\t{G})$ and \linebreak
$\Resg\circ\indg M\to M$ in $\DGtp$. In particular we see that $e\ast f\cong e.$\\
(vii) The restriction \beq\label{indfun} \indg|_{e\DGtp}:e\DGtp\to f\DGt \eeq is strong semigroupal, compatible with the pivotal and braided $\Gamma$-crossed structures and induces a bijection on isomorphism classes of objects.\\
(viii) If the functor $M\mapsto e\ast M$ is isomorphic to the identity functor on $e\DGtp$, the functor (\ref{indfun}) is faithful.\\
(ix) If the functors $M\mapsto e\ast M$ and $N\mapsto f\ast N$ are isomorphic to the identity functors on $e\DGtp$ and $f\DGt$ respectively, the functor (\ref{indfun}) is an equivalence of braided $\Gamma$-crossed pivotal $\r$-categories, a quasi-inverse to which is provided by the functor (also see (iv) above) \beq f\DGt\ni N\mapsto e\ast N \in e\DGtp.\eeq

\eprop

\bpf
The proof is a straightforward generalization of the proof of \cite[Prop. 4.3]{De3}.
\epf

\subsection{Induction and sheaf-function correspondence}\label{indandsf}
Let $G$ be an algebraic group equipped with a Frobenius $F:G\to G$ which corresponds to the form $G_0$ over $\Fq$. As in \S\ref{semidir}, we form the semidirect product $\t{G}=G\rtimes\f{Z}$, where $1\in\f{Z}$ acts on $G$ by $F$. Suppose $G'\subset G$ is an $F$-stable subgroup or in other words $G'$ is obtained by base change from a closed subgroup $G'_0\subset G_0$ over $\Fq$. We have the subgroup $\t{G}':=G'\rtimes \f{Z}\subset \t{G}$ and by \S\ref{secindfun} we have the weak semigroupal functor
\beq
\indg:\D_{G'}(\t{G}')\to \D_G(\t{G}).
\eeq
In particular we get the functors
\beq\label{indgtg}
\indg:\D_{G'}({G}'F)\to \D_G({G}F)
\eeq
\beq
\indg:\D_{G'}({G}')\to \D_G({G})
\eeq
as well as
\beq
\indg:\D^{Weil}_{G'_0}({G}'_0)\to \D^{Weil}_{G_0}({G}_0).
\eeq

Next we interpret the functor (\ref{indgtg}) in terms of the usual notion of induction of representations. Note that the inclusion $G'\subset G$ induces a map $H^1(F,G')\to H^1(F,G)$. By \S\ref{astc} we have 
$$\D_{G'}(G'F)\cong \bigoplus\limits_{<t>\in H^1(F,G')}{D^b\Rep({G'}^t_0(\Fq))},$$
$$\D_G(GF)\cong \bigoplus\limits_{<t>\in H^1(F,G)}{D^b\Rep({G}^t_0(\Fq))}.$$

\blem\label{indisind}
Let $t\in G'$ and let $\O'\subset G'F$ be the $G'$-conjugacy class of $tF\in G'F$ and let $\O\subset GF$ be the $G$-conjugacy class of $tF$. Then the induction functor $\indg$ takes the full subcategory $D^b\Rep({G'}^t_0(\Fq))\cong\D_{G'}(\O')\subset \D_{G'}(G'F)$ to the full subcategory $D^b\Rep({G'}^t_0(\Fq))\cong\D_G{(\O)}\subset \D_G(GF)$ and the restriction of the functor $\indg$ to $\D_{G'}(\O)$ can be identified with the functor 
$$\ind_{{G'}^t_0(\Fq)}^{\Gtq}:D^b\Rep({G'}^t_0(\Fq))\to D^b\Rep({G}^t_0(\Fq)).$$
\elem
\bpf
This follows from the equivalences $\D_{G'}(\O')\cong \D_{{G'}_0^t(\Fq)}(tF)$ and  $\D_{G}(\O)\cong \D_{{G}_0^t(\Fq)}(tF)$ and the definition of induction functors using the averaging functors.  
\epf

In this situation we also have an induction map between the function spaces
\beq
\ind_{G'_0}^{G_0}:\Fun([G'_0](\Fq))\to\FunG
\eeq
obtained from induction of class functions 
$$\ind_{G'^t_0(\Fq)}^{G^t_0(\Fq)}:\Fun(G'^t_0(\Fq))^{G'^t_0(\Fq)}\to\Fun(G^t_0(\Fq))^{G^t_0(\Fq)}.$$

Using Lemma \ref{indisind} we get
\blem\label{inddgf}
Let $M\in \D_{G'}(G'F)$. Then we have $\ind_{G'_0}^{G_0}(\chi_M)=\chi_{\indg(M)}.$
\elem

We recall the following from \cite{Bo2}:
\bprop\label{sfandind} (\cite[Prop. 4.12]{Bo2})
Let $(C,\psi)\in \D_{G'_0}^{Weil}(G'_0)$. Then $T_{\indg(C,\psi)}=\ind_{G_0'}^{G_0}(T_{C,\psi})$, i.e. induction of conjugation equivariant Weil sheaves is compatible with the sheaf-function correspondence (provided we take all pure inner forms into account).
\eprop

\section{Reduction process for neutrally unipotent groups}\label{rpfornug}
In this section we assume that $G$ is a neutrally unipotent group equipped with an $\Fq$-structure given by a Frobenius $F:G\to G.$ In this section we reduce the proofs of Theorems \ref{main2} and \ref{main3} to the case of Heisenberg idempotents. In \S\ref{heiscase} we will prove these theorems in the Heisenberg case.

Let $f\in \widehat{G}^F$ and let $f_0\in \DGn$ be the corresponding geometrically minimal idempotent. Now without loss of generality (by passing to a pure inner form of $G_0$ if necessary) by Theorem \ref{Fstabe}, we may assume that $f_0$ comes from an admissible pair $(H_0,\L_0)$ for $G_0$. Let $G_0'$ be the normalizer of the admissible pair. Let $e_0\in \D_{G'_0}(G'_0)$ be the corresponding Heisenberg idempotent. By extension of scalars we get an $F$-stable admissible pair $(H,\L)$ for $G$ with normalizer $G'$. {Note that here $F$ may be different from the original one we started with since we may have to choose a different pure inner form in order to obtain the admissible pair $(H_0,\L_0)$.} Let $e\in \DGp$ be the corresponding Heisenberg idempotent on $G'\subset G.$ The idempotent $e\in \DGp$ satisfies the Mackey condition with respect to $G$.   As before, let $\t{G}=G\rtimes \f{Z}$ and $\t{G}'=G'\rtimes \f{Z}$. Now the subgroup $\t{G}'\subset \t{G}$ and $e\in \D_{G'}(\t{G}')$ satisfy the conditions of Proposition \ref{indmackey}. Moreover since $e\in \DGp$ and $f\in \DG$ are closed idempotents, we conclude using Proposition \ref{indmackey}({ix}) that we have an equivalence 
\beq
\indg: e\D_{G'}(\t{G}')\cong f\D_G(\t{G})
\eeq
of braided $\f{Z}$-crossed pivotal $\r$-categories. Moreover by \cite[Thm. 1.52]{BD2} the induction functor  $\indg$ preserves perverse sheaves in $e\D_{G'}(\t{G}')$ up to a shift by $\dim(G/G')$. In particular this induces an equivalence of modular categories 
\beq\label{mgf}
\indg:\M_{G',e}\cong \M_{G,f}
\eeq
as well as their respective module categories 
\beq\label{mgff}
\indg:\M_{G'F,e}\cong \M_{GF,f}
\eeq and identifies their natural module traces. It is clear that we also have an equivalence of their respective module categories 
\beq\label{mgff+}
\indg:\M_{G'F,e}^+\cong \M_{GF,f}^+
\eeq equipped with the positive traces.

\subsection{Reduction of Theorem \ref{main2} to the Heisenberg case}\label{redofmain2}
We continue to use all the notation from \S\ref{rpfornug}. We will now prove Theorem \ref{main2} for the general minimal idempotent $f\in\DG$ assuming that it holds for the Heisenberg idempotent $e\in \DGp$.

Theorem \ref{main2}(i) for $f$ follows from (\ref{mgf}) and (\ref{mgff}) assuming the corresponding statement for $e$. To prove the remaining statements, we first prove two auxiliary lemmas.

\blem (See \cite[Lem. 6.24]{Bo2})\label{span}
Let $f_0\in \DGn$ be any idempotent. Then:\\
(i) The ``trace of Frobenius'' functions associated with the objects of $\DGn$ span the space $\FunG$.\\
(ii) The ``trace of Frobenius'' functions associated with the objects of $f_0\DGn$ span the space $T_{f_0}\FunG$.
\elem
\bpf
The statement (i) is precisely Lemma 6.24 in \cite{Bo2}. Now if $C_0\in \DGn$, then $T_{f_0\ast C_0}=T_{f_0}\ast T_{C_0}$ by Lemma \ref{eqcompwithconv}. Hence (ii) follows from (i).
\epf

\blem\label{indisom}
The induction of class functions on the pure inner forms induces an isomorphism of Frobenius algebras
$$\ind_{G'_0}^{G_0}: T_{e_0}\Fun([G'_0](\Fq))\xto\cong T_{f_0}\FunG.$$
\elem
\bpf
It is clear that the induction map $\ind_{G'_0}^{G_0}: \Fun([G'_0](\Fq))\xto\cong \FunG$ commutes with the linear functionals $\lambda':\Fun([G'_0](\Fq))\to \Qlcl$ and $\lambda:\Fun([G_0](\Fq))\to \Qlcl$ defined by (\ref{linfun}).
By Proposition \ref{sfandind}, $\ind_{G'_0}^{G_0}(T_{e_0})=T_{f_0}$. We have an equivalence of braided categories $$\indg:e\DGp\xto{\cong} f\DG$$ compatible with the action of $F$ on both sides and hence an equivalence 
$$\indg:e_0\D^{Weil}_{G'_0}(G'_0)\xto\cong f_0\D^{Weil}_{G_0}(G_0).$$
The lemma then follows from Proposition \ref{sfandind} and Lemma \ref{span}.

\epf

Now by (\ref{mgff}) $\indg$ induces a bijection between the sets of simples $\O_{\M_{G'F,e}}$ and $\O_{\M_{GF,f}}$. Now we use the assumption that Theorem \ref{main2}(ii) and  (iii) hold for $e\in \DGp$. With this assumption we have the bijection $\O_{\M_{G'F,e}}\cong \Irrep_{e_0}(G'_0)$ and hence $\{\chi_{M'}\}_{M'\in \O_{\M_{G'F,e}}}$ is a basis of $T_{e_0}\Fun([G'_0](\Fq))$ (see Remark \ref{simplebasis}). For $M'\in \O_{\M_{G'F,e}}$, let $M=\indg(M')\in \O_{\M_{GF,f}}$. Then by Lemmas \ref{inddgf} and \ref{indisom} we see that $\chi_M\in T_{f_0}\FunG$ and $\{\chi_M\}_{M\in \O_{\M_{GF,f}}}$ is a basis of $T_{f_0}\FunG$. Hence the irreducible representation (of some pure inner form $G_0^t(\Fq)$) corresponding to $M$ indeed lies in $\Irrep_{f_0}(G_0)$ and this sets up a bijection between $\O_{\M_{GF,f}}$ and $\Irrep_{f_0}(G_0)$ proving Theorem \ref{main2}(ii) and (iii) for the idempotent $f\in \DG$.

\subsection{Reduction of Theorem \ref{main3} and Conjecture \ref{conjmain} to the Heisenberg case}\label{redofmain3}
Statement (ii) of Theorem \ref{main3} is an immediate consequence of (i) using Corollary \ref{cor1}. We now prove the statement (i) for $f$ assuming it for $e$.

Note that for an endomorphism $a$ in $\D_{G'}(G'F)$ we have $\tr_{G,F}(\indg(a))=\tr_{G',F}(a)$ for the traces on $\D_{G'}(G'F)$ and $\D_G(GF)$ defined in \S\ref{traceondgf}. Also by \cite[Thm. 1.52]{BD2}, $\dim G-d_f=\dim G'-d_e$ and $2d_f\equiv 2d_e \mod 2$. Moreover by (\ref{mgf}) $\dim({\M_{G',e}})=\dim(\M_{G,f})$ and by (\ref{mgff}) the traces $\tr_F,e$ and $\tr_{F,f}$ are compatible with the induction functor and so are the positive traces $\tr_{F,e}^+$ and $\tr_{F,f}^+$. Hence we see that Theorem \ref{main3} holds for $f$ if it holds for $e$.

This argument also proves that Conjecture \ref{conjmain} for $f\in \DG$ is equivalent to the same statement for $e\in \DGp$. Hence the conjecture is reduced to the case of Heisenberg idempotents.

\section{The Heisenberg case}\label{heiscase}
In this section we complete the proofs of Theorems \ref{main2} and \ref{main3} in the case of Heisenberg idempotents.

We begin by recalling the setup in the Heisenberg case. As before we assume that $G$ is a neutrally unipotent group equipped with an $\Fq$-structure given by a Frobenius $F:G\to G$. We assume that we have a Heisenberg admissible pair $(H_0,\L_0)$ for the group $G_0$ over $\Fq$. Let $e_0\in \DGn$ be the corresponding geometrically minimal (Heisenberg) idempotent and $e\in \DG$ the Heisenberg idempotent on $G$ obtained by extension of scalars from $e_0$. Note that in this case $n_e=\dim H$. Let $U=G^\circ$. Then $U$ is an $F$-stable connected unipotent group and let $U_0$ be the corresponding unipotent group defined over $\Fq$. Let $\Gamma:=\Pi_0(G)=G/U$.

By definition $U/H$ is commutative and the homomorphism $\phi_\L:U/H\to (U/H)^*$ (from condition (ii) of Definition  \ref{defadpair}) is an isogeny. The finite group $\Gamma$ acts on $U/H$ and $\phi_\L$ is $\Gamma$-equivariant. Let $K_{\L}:=\ker(\phi_\L)$.  Let $\theta:K_\L\to \Qlcl^\times$ be the corresponding quadratic form defined in \cite[\S A.10]{Bo1}. By \cite{De1} we have $e\D_U(U)\cong D^b\M(K_\L,\theta)$, where $\M(K_\L,\theta)$ is the pointed modular category defined by the metric group $(K_\L,\theta)$.

Now by definition the pair $(H,\L)$ is normalized by $G$ and is also $F$-stable. Note that we have the induced automorphism $F:\Gamma\to \Gamma$. Using the Frobenius action, we form the semidirect products $\t{\Gamma}:=\Gamma\rtimes \f{Z}$ and $\t{G}:=G\rtimes \f{Z}$. Thus we have an extension
\beq
0\to U\to \t{G}\to \t{\Gamma}\to 0
\eeq
of the discrete group $\t{\Gamma}$ by $U$. The admissible pair $(H,\L)$ is normalized the group $\t{G}$. By \S\ref{brcrgrex} the category $\D_U(\t{G})$ is a braided $\t{\Gamma}$-crossed pivotal $\r$-category and so is the Hecke subcategory $e\D_U(\t{G})$.

Let $\tM_{\t{G},e}\subset e\D_U(\t{G})$ denote the full subcategory of those objects whose underlying complexes are perverse sheaves shifted by $\dim H$. For $\t{g}\in \t{G}$ we let $\tM_{U\t{g}, e}\subset e\D_U(U\t{g})$ be defined similarly. Then we have
\beq\label{brtgcr}
\tMeg=\bigoplus\limits_{U\tg \in \t{G}/U}{\tMtg}.
\eeq

The following result is proved in \cite{De1}:
\bthm\label{De1main}
(i) For each $U\tg\in \t{G}/U$, $\tMtg$ is a semisimple abelian category with finitely many simple objects and we have $D^b\tMtg\cong e\D_U(\t{G})$.\\
(ii) The category $\tMeg\subset e\D_U(\tG)$ is closed under convolution. It is a braided $\t{\Gamma}$-crossed spherical \emph{fusion} category with trivial component $\tM_{U,e}=\M_{U,e}\cong \M(K_\L,\theta)$. 
\ethm

The category $e\D_G(\tG)$ is a braided $\f{Z}$-crossed pivotal $\r$-category. We have $e\D_G(\tG)\cong e\D_U(\tG)^\Gamma$.
Let $\M_{\tG,e}\subset e\D_G(\tG)$ be the full subcategory of objects whose underlying complex is a perverse sheaf shifted by $\dim H$. Similarly for an integer $n$ we define $\M_{GF^n,e}\subset e\D_G(GF^n)\subset e\D_G(\t{G})$. We have
\beq\label{zgrading}
\Mtge=\bigoplus\limits_{n\in \f{Z}}\M_{GF^n,e}.
\eeq

Now using Theorem \ref{De1main} we obtain:
\blem\label{equivariantization}
The category $\Mtge\subset e\D_G(\tG)$ is closed under convolution. In fact $\Mtge$ is a braided $\f{Z}$-crossed spherical fusion category with trivial component $\Meg$ and can be obtained as a $\Gamma$-equivariantization:
\beq
\Mtge\cong \left({{{\tM_{\tG,e}}}}\right)^{{\Gamma}}.
\eeq
We have $D^b\Mtge \cong e\D_G(\tG)$.
\elem

\subsection{Proof of Theorem \ref{main2}}\label{pfofmain2}
We now prove Theorem \ref{main2} in the Heisenberg case.

Using (\ref{zgrading}), Lemma \ref{equivariantization} along with \cite[\S6]{ENO} we see that each $\M_{GF^n,e}$ (and in particular $\M_{GF,e}$) is an invertible $\Meg$-module category equipped with an $\Meg$-module trace obtained from the natural spherical structure on the braided $\f{Z}$-crossed category $\Mtge$. This trace is normalized in the sense of (\ref{normtr}) by Remark \ref{crbrandmod}.  This completes the proof of Theorem  \ref{main2}(i).

We make the following general observation:
\blem\label{lemdfgtoweil}
Let $H$ be any  connected algebraic group equipped with a Frobenius $F:H\to H$. We have a functor \beq
\alpha:\D^F_H(H)\to \D^{Weil}(H_0)
\eeq 
such that for each $M\in \D^F_H(H)$ the underlying $\Qlcl$-complexes of $M$ and $\alpha(M)$ are the same and we have $\chi_M=t_{\alpha(M)}$ i.e. the character of $M$ is equal to the ``trace of Frobenius'' function associated with $\alpha(M)$.
\elem
\bpf
If $(M,\phi_M)\in\D^F_H(H)$ where $M\in \D(H)$ and $\phi_M$ is the equivariance structure of $M$, then we have isomorphisms of stalks
\beq\label{dfgtoweil}\phi_M(h,F(h)):M_{F(h)}\xto\cong M_{h} \hbox{ for each $h\in H$}.\eeq 
In other words we obtain a Weil structure on $M\in \D(H)$. To be precise, let $\Delta_{F}\subset H\times H$ denote the graph of the Frobenius $F:H\to H$. Then we set $\alpha(M,\phi_M)=(M,\phi_M|_{\Delta_{F}})\in \D^{Weil}(H_0)$. 

Now suppose that $h\in H_0(\Fq)$. Then by (\ref{chim}) we have $\chi_{M,\phi_M}(h)=tr(\phi_M(h,1))$ where 
\beq\label{chimu}\phi_M(h,1):M_1\xto{\cong} M_1.\eeq
On the other hand by (\ref{dfgtoweil}) for the ``trace of Frobenius function'' associated with $\alpha(M)$ we have $t_{\alpha(M,\phi_M)}=tr(\phi_M(h,h))$. Now since $H$ is connected, there exists $v\in H$ such that $h=vF(v)^{-1}$ by Lang's theorem. Then we have a commutative diagram
$$\xymatrixcolsep{7pc}\xymatrix{
M_1\ar[r]^{\phi_M(h,1)}\ar[d]_{\phi_M(v,1)} & M_1\ar[d]^{\phi_M(v,1)}\\
M_h\ar[r]^{\phi_M(h,h)} & M_h.\\
}$$
Hence $tr({\phi_M(h,1)})=tr({\phi_M(h,h)})$. Hence we see that $\chi_M=t_{\alpha(M)}$ for each $M\in \D^F_H(H)$. 
\epf



Let us now return to our original setting where we have the admissible pair $(H_0,\L_0)$ for the neutrally unipotent group $G_0$ and the corresponding geometrically minimal idempotent $e_0\in \DGn$. Taking the functions associated with sheaves, we get the character
\beq
t_{\L_0}:H_0(\Fq)\to \Qlcl^\times
\eeq
and the corresponding idempotent function $t_{e_0}=\frac{1}{q^{\dim H}}\cdot t_{\L_0}\in \Fun(G_0(\Fq))^{\Fun(G_0(\Fq))}$ which is supported on the normal subgroup $H_0(\Fq)\normal G_0(\Fq)$.

Let $\Rep_{e_0}(H_0(\Fq))\subset \Rep(H_0(\Fq))$ denote the full subcategory of representations of $H_0(\Fq)$ where the group acts by the character $t_{\L_0}$. Note that we have $D^b\Rep(H_0(\Fq))\cong \D^F_H(H)$ the equivariant derived category for the $F$-conjugation action of $H$ on itself. Since $H$ is connected, this action is transitive.

\blem\label{repe0}
Let $W\in \Rep(H_0(\Fq))$. Let $W_{loc}$ be the corresponding local system in $\D_H^F(H)$. Then $W\in \Rep_{e_0}(H_0(\Fq))$ if and only if $W_{loc}\in e\D_H^F(H)$.
\elem
\bpf
Let $W$ be the 1-dimensional representation of $H_0(\Fq)$ corresponding to the character $t_{\L_0}$, i.e. $W$ is the unique simple object in the category $\Rep_{e_0}(H_0(\Fq))$ (which is equivalent to $\Vec$ as an abelian category). It is clear that $\D_H^F(H)\ni W_{loc}\cong \L$ (see also Lemma \ref{lemdfgtoweil}). Since of $e=\L\otimes \can_H$ and $\L$ is a multiplicative local system, it is clear that $e\ast \L\cong \L$, i.e. $\L\cong W_{loc}\in e\D^F_H(H)$. 

Now since $H$ is connected and unipotent, $\D_H^F(H)\subset \D(H)$ is a full subcategory and hence \linebreak $e\D_H^F(H)\subset e\D(H)$ is a full subcategory. But $e\D(H)\cong D^b\Vec$ and all objects of $e\D(H)$ are of the form $V\otimes\L$ where $V\in \Vec$ (thought of as a constant local system on $H$). Since $\L\in e\D^F_H(H)$ we see that $e\D^F_H(H)=e\D(H)$. The lemma now follows.
\epf

To prove Theorem  \ref{main2}(ii) and (iii)  we have to compare the set of simple objects in the category $\M_{GF,e}\subset e\D_G(GF)$ and the set $\Irrep_{e_0}(G_0)$. Let $\O\subset G$ be an orbit of the $F$-conjugation action. Without any loss of generality, for convenience of notation (after possibly modifying the Frobenius $F$ by an inner automorphism of $G$) we assume that $1\in \O$. Hence we have $\D^F_G(\O)\cong D^b\Rep(G_0(\Fq))$. 

Let $\Rep_{e_0}(G_0(\Fq))\subset \Rep(G_0(\Fq))$ denote the full subcategory of representations of $G_0(\Fq)$ in which the normal subgroup $H_0(\Fq)$ acts by the character $t_{\L_0}$. 

We now prove:
\bprop\label{edfgo}
Let $W\in \Rep(G_0(\Fq))$. Let $W_{loc}$ be the corresponding local system in $\D_G^F(\O)\subset \D^F_G(G)$. Then $W\in \Rep_{e_0}(G_0(\Fq))$ if and only if $W_{loc}\in e\D_G^F(G)$. We have an equivalence $e\D^F_G(\O)\cong D^b\Rep_{e_0}(G_0(\Fq))$.
\eprop
\bpf
Let $Q$ denote the algebraic group $G/H$. Then we have the induced Frobenius $F:Q\to Q$ and the algebraic group $Q_0$ over $\Fq$. Since $H$ is connected, we have $Q_0(\Fq)=G_0(\Fq)/H_0(\Fq)$. Let us define the group $Q_H:=\{g\in G|gF(g)^{-1}\in H\}$. This is the subgroup of $G$ that takes $H\subset \O$ to itself under the $F$-conjugation action  of $G$ on $\O$. We have that $H=Q_H^\circ$ and $Q_H/H\cong Q_0(\Fq)$. We have $G_0(\Fq)\subset Q_H$. We have an equivalence $\D^F_G(\O)\cong \D^F_{Q_H}(H)$, where the latter is the category of $Q_H$-equivariant sheaves on $H$ for the $F$-conjugation action of $Q_H$ on $H$. Hence we have $\D_G^F(\O)\cong \D_H^F(H)^{Q_0(\Fq)}$. The proposition now follows from Lemma \ref{repe0}.
\epf

Finally to prove Theorem \ref{main2}(ii) and (iii) we apply Proposition \ref{edfgo} to each $F$-conjugacy class $\O\in H^1(F,G)$.

\subsection{Proof of Theorem \ref{main3}}\label{pfofmain3}
It suffices to prove Theorem \ref{main3}(i) since (ii) is an immediate consequence.

Let $W\in \Rep_{e^g_0}(G_0^g(\Fq))$ for some $g\in G$ and let $M_W:=W_{loc}[\dim G+\dim H]\in {\M_{GF,e}}$ be the corresponding  object obtained by Theorem \ref{main2}. We have 
\beq
\tr_{F,e}^+(\id_{M_W})=\FPdim(M_W)\hbox{ and }\tr_F(\id_{M_W})=(-1)^{\dim G+\dim H}\cdot\frac{\dim(W)}{|G_0(\Fq)|}.
\eeq Hence to prove Theorem \ref{main3}(i) it suffices to prove that 
\beq\label{suff1}
\FPdim(M_W)=\frac{q^{\dim G}\cdot\sqrt{\dim \Meg}}{q^{d_e}}\cdot \frac{\dim(W)}{|G_0(\Fq)|}
\eeq
for each such $W$. As before for ease of notation, without loss of generality we may assume $W\in  \Rep_{e_0}(G_0(\Fq))$ corresponding to the trivial inner form. As before, let $\O\subset G$ denote the $F$-conjugacy class of $1\in G$. Then by our assumption, $M_W\in \M_{GF,e}\subset \D_G(GF)$ is supported on $\O F$. 

Recall that we have defined $U=G^\circ$ and $\Gamma=G/U$. Also recall that by (\ref{brtgcr}) and Theorem \ref{De1main} we have the braided $\tGam$-crossed spherical fusion category $\tM_{\tG,e}$. Set 
\beq
\tM_{G,e}:=\bigoplus\limits_{Ug\in G/U\subset \tGam}\tM_{Ug,e},
\eeq
\beq
\tM_{GF,e}:=\bigoplus\limits_{Ug\in G/U\subset \tGam}\tM_{UgF,e}.
\eeq
Then $\tM_{GF,e}$ is an $\tM_{G,e}$ bimodule category and by Lemma \ref{equivariantization}, we have
\beq\label{eq1}
\Meg=(\tM_{G,e})^\Gamma,
\eeq
\beq\label{eq2}
\M_{GF,e}=(\tM_{GF,e})^\Gamma.
\eeq

$\tM_{G,e}$ is a (faithfully graded) braided $\Gamma$-crossed spherical fusion category with trivial component $\tM_{U,e}=\M_{U,e}$. Hence $\dim(\tM_{G,e})=|\Gamma|\cdot \dim(\M_{U,e})$ and hence after taking the $\Gamma$-equivariantization we conclude that (see also \cite[Prop. 2.17]{De2}) 
\beq\label{catdim}
\dim(\Meg)=|\Gamma|^2\cdot \dim(\M_{U,e}).
\eeq 

The object $M_W\in \M_{GF,e}$ is supported on $\O F$. Let $\O'\subset \Gamma$ be the $F$-conjugacy class of $1\in \Gamma.$ By Lang's theorem
\beq
\O=\coprod\limits_{Ug\in \O'\subset G/U}Ug.
\eeq
Using (\ref{eq2}) we think of $M_W$ as an object of $(\tM_{GF,e})^\Gamma$. Let $M'\in \tM_{GF,e}$ denote the underlying object. Then $M'\in \bigoplus\limits_{Ug\in \O'\subset G/U}\tM_{UgF,e}.$ Let $M'_{UF}\in \tM_{UF,e}$ be the projection of $M'$ in $\tM_{UF,e}$. Then we have 
\beq\label{FPdim}
\FPdim(M_W)=|\O'|\cdot\FPdim(M'_{UF})=\frac{|\Gamma|}{|\Gamma_0(\Fq)|}\cdot \FPdim(M'_{UF}).
\eeq
The object $M'_{UF}\in \tM_{UF,e}\subset e\D_U(UF)$ corresponds (in the sense of Theorem \ref{main3} applied to $U$) to the restriction $W'=\Res^{G_0(\Fq)}_{U_0(\Fq)}(W)$. {We have $\Gamma_0(\Fq)=G_0(\Fq)/U_0(\Fq)$ and $|U_0(\Fq)|=q^{\dim U}=q^{\dim G}$ since $U$ is a connected and unipotent group.} Hence by (\ref{catdim}) and (\ref{FPdim}), to prove \ref{suff1}, we are reduced to showing that
\beq\label{suff2}
\FPdim(M'_{W'})=\frac{\sqrt{\dim \M_{U,e}}}{q^{d_e}}\cdot {\dim(W')}
\eeq
for each $W'\in \Rep_{e_0}(U_0(\Fq))$, where $M'_{W'}\in \tM_{UF,e}\subset e\D_U(UF)$ is the object corresponding to $W'$ according to Theorem \ref{main2}. Now, $\tM_{U,e}=\M_{U,e}$ is a pointed modular category. Hence all simple objects in the invertible $\tM_{U,e}$-module category $\tM_{UF,e}$ have equal Frobenius-Perron dimension.  Hence
\beq\label{dim1}
|\Irrep_{e_0}(U_0(\Fq))|.\FPdim(M'_{W'})^2=\dim(\M_{U,e})
\eeq
Also $(U_0(\Fq)/H_0(\Fq))$ is commutative, so all irreducible representations in $\Rep_{e_0}(U_0(\Fq))$ have the same dimension and we have
\beq\label{dim2}
|\Irrep_{e_0}(U_0(\Fq))|.\dim(W')^2=|U_0(\Fq)/H_0(\Fq)|=\frac{q^{\dim G}}{q^{\dim H}}=q^{2d_e}.
\eeq
The equality  (\ref{suff2}) follows from equations (\ref{dim1}) and (\ref{dim2}). This completes the proof of Theorem \ref{main3}.

\section{Grothendieck ring of Weil sheaves}\label{grows}
In this section we study a certain Grothendieck rings of Weil sheaves. We will prove that the Grothendieck ring of $F$-stable character sheaves on a neutrally unipotent group $G$ is isomorphic to the algebra $\FunG$ (see Corollary \ref{t:main}).

\subsection{Categorical constructions}\label{catcon}
We begin with some generalities for abstract modular categories equipped with a modular autoequivalence (or equivalently an action of $\f{Z}$). Let $(\C,\otimes,\un)$ be a $\Qlcl$-linear modular category equipped with an action of $\f{Z}$ by modular autoequivalences. Let $a:\C\to \C$ be the modular autoequivalence corresponding to $1\in \f{Z}$. We form the $\f{Z}$-equivariantization of $\C$ and denote it by $(\C^{\f{Z}},\otimes,\un)$. The objects of $\C^\bZ$ can be thought of as pairs $(C,\psi)$, where $C\in \C$ and $\psi:a^{-1}(C)\xto{\cong} C$. This is a (nonsemisimple) spherical braided monoidal abelian category. Hence the Grothendieck ring $K_0(\C^{\f{Z}})$ is a commutative ring. 

Let us consider the trivial modular category $\Vec$ equipped with the trivial action of $\f{Z}$. Then $\Vec^{\f{Z}}$ is the category of finite dimensional $\Qlcl$-vector spaces equipped with an automorphism $\psi$, or equivalently an action of $\f{Z}$. We have the ring homomorphism $K_0(\Vec^\bZ)\rar{}\Qlcl$ that takes the class of $(V\in\Vec,\psi\in\Aut V)\in \Vec^\bZ$ to $tr(\psi)\in \Qlcl$. We can identify $K_0(\Vec^\bZ)$ with the group ring $\bZ[\Qlcl^\times]$ so that the element $[\alpha]\in\bZ[\Qlcl^\times]$ corresponding to group element $\alpha\in \Qlcl^\times$ gets identified with the class of a $1$-dimensional vector space equipped with the automorphism of multiplication by $\alpha$. Our morphism $K_0(\Vec^\bZ)\rar{}\Qlcl$ takes $[\alpha]\in \bZ[\Qlcl^\times]$ to $\alpha\in \Qlcl$.

For any modular category $\C$ equipped with a $\f{Z}$ action, we have the braided functor $\Vec^{\f{Z}}\to \C^{\f{Z}}$ which induces a ring homomorphism $K_0(\Vec^\bZ)\rar{}K_0(\C^\bZ)$. Now define the commutative $\Qlcl$-algebra 
\beq
A_{\C,a}:=K_0(\C^\bZ)\otimes_{K_0(\Vec^\bZ)}\Qlcl.
\eeq 

\bdefn\label{d:linfun}
The algebra $A_{\C,a}$ is equipped with a canonical linear functional $\lambda_{\C,a}:A_{\C,a}\rar{}\Qlcl$ such that $\lambda_{\C,a}(1)=1$ defined as follows:\\
The functor $\C^\bZ\rar{}\Vec^\bZ$ that takes $C\in\C^\bZ$ to $\Hom(\mathbf{1},C)\in\Vec^\bZ$ induces a morphism of $K_0(\Vec^\bZ)$-modules $K_0(\C^\bZ)\rar{}K_0(\Vec^\bZ)$ and therefore a $\Qlcl$-linear functional $\lambda_{\C,a}:A_{\C,a}\rar{}\Qlcl$.
\edefn

\begin{prop}
(i) We have $\dim A_{\C,a}=|\O_\C^\bZ|$, where $\O_\C^\bZ$ is the set of $\bZ$-invariant (equivalently $a$-invariant) elements of ${\O_\C}$.\\
(ii) The bilinear form $(a_1,a_2)\longmapsto\lambda_{\C,a}(a_1a_2)$ for $a_1,a_2\in A_{\C,a}$, is nondegenerate. In other words, $A_{\C,a}$ is a Frobenius algebra. 

\end{prop}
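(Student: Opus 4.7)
The plan is to classify the simple objects of $\C^\bZ$ and track how the base change by $K_0(\Vec^\bZ)\to\Qlcl$ cuts them down to a basis of $A_{\C,a}$ indexed by $\O_\C^\bZ$, then reduce the nondegeneracy of the bilinear form to the rigidity of $\C$. For (i), since $\O_\C$ is finite, every $a$-orbit on $\O_\C$ is finite; over an orbit $O$ of size $n$ with representative $C$, the simple equivariant objects form a $\Qlcl^\times$-torsor $\{X_\alpha\}_{\alpha\in\Qlcl^\times}$, where $\alpha$ is the Schur scalar recording the equivariance on the underlying object $C\oplus a(C)\oplus\cdots\oplus a^{n-1}(C)$. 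A direct computation (solving the intertwining relation $\phi\circ(\beta\cdot\psi_\alpha)=\psi_\gamma\circ a^{-1}(\phi)$ for a scalar automorphism $\phi=\bigoplus\lambda_k\id_{a^k(C)}$) yields $V_\beta\otimes X_\alpha\cong X_{\beta^n\alpha}$, where $V_\beta\in\Vec^\bZ$ is the $1$-dimensional object on which $a$ acts by $\beta$. Hence in $K_0(\C^\bZ)$ we have $[\beta]\cdot[X_\alpha]=[X_{\beta^n\alpha}]$, and after base change via $[\beta]\mapsto\beta$: for a fixed orbit $(n=1)$ the whole $\Qlcl^\times$-family collapses to a single line $\Qlcl\cdot[X_1]$, while for a non-fixed orbit $(n>1)$ any nontrivial $\mu\in\mu_n(\Qlcl)$ gives $\mu\cdot[X_\alpha]=[X_{\mu^n\alpha}]=[X_\alpha]$ in $A_{\C,a}$, so $(\mu-1)[X_\alpha]=0$ forces $[X_\alpha]=0$. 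Fixing basepoints $\psi_C:a^{-1}(C)\xrightarrow{\cong}C$ for each $C\in\O_\C^\bZ$, the classes $e_C:=[(C,\psi_C)]$ then form a basis of $A_{\C,a}$ and $\dim_\Qlcl A_{\C,a}=|\O_\C^\bZ|$.

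For (ii), I would evaluate $\lambda_{\C,a}$ on this basis: Schur's lemma gives $\Hom_\C(\un,C)=0$ for simple $C\neq\un$, and choosing $\psi_\un=\id$ yields $\lambda_{\C,a}(e_C)=\delta_{C,\un}$. To analyze the Gram matrix $M_{C,D}:=\lambda_{\C,a}(e_C\cdot e_D)$ I would invoke rigidity: in a semisimple rigid category $\Hom_\C(\un,C\otimes D)\neq 0$ iff $D\cong C^*$, and with multiplicity one; moreover $C\mapsto C^*$ restricts to a bijection of $\O_\C^\bZ$ because $a$ preserves duals. Decomposing $(C\otimes D,\psi_C\otimes\psi_D)$ in the basis of $A_{\C,a}$ by means of part (i), only the $\un$-summand of $C\otimes C^*$ survives: its induced equivariance is some scalar $\alpha_C\in\Qlcl^\times$, whence $M_{C,D}=\alpha_C\,\delta_{D,C^*}$. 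This is the product of a permutation matrix (implementing the involution $C\leftrightarrow C^*$) with a diagonal matrix of nonzero scalars, hence invertible, so $A_{\C,a}$ is Frobenius.

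The main subtlety lies in (i): because $\bZ$ is infinite, $\C^\bZ$ has uncountably many isomorphism classes of simple objects (a whole $\Qlcl^\times$-torsor over each finite $a$-orbit), and the entire purpose of the base change $K_0(\C^\bZ)\otimes_{K_0(\Vec^\bZ)}\Qlcl$ is to collapse each such torsor. The key point is the $n$-th-power twist $V_\beta\otimes X_\alpha\cong X_{\beta^n\alpha}$ rather than a linear one: for fixed orbits this merely rescales the chosen basis vector, but for $n>1$ it makes $\mu_n(\Qlcl)$ act trivially on $\alpha$ while nontrivially on the coefficient $\mu\in\Qlcl^\times$, so the corresponding classes are annihilated after the action of $K_0(\Vec^\bZ)$ is transferred to $\Qlcl$.
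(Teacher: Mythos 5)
Your proof is correct and follows essentially the same line of argument as the paper. In part (i), you spell out the $\Qlcl^\times$-torsor of simple equivariant objects over each $a$-orbit and the twist formula $V_\beta\otimes X_\alpha\cong X_{\beta^n\alpha}$, which is exactly the computation underlying the paper's assertion that $K_0(\cM_O^\bZ)$ is a $\bZ[\Qlcl^\times]$-module noncanonically isomorphic to $\bZ[\Qlcl^\times]/I_n$, and the collapse after base change by $K_0(\Vec^\bZ)\to\Qlcl$ is the same in both; in part (ii), identifying the Gram matrix as a monomial matrix supported on the duality involution $C\leftrightarrow C^*$ is precisely the paper's statement that the pairing $L_u\times L_{u'}\to\Qlcl$ is nonzero iff $u'$ is dual to $u$.
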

\begin{proof}
Let us describe $K_0(\cM^\bZ)$ as a module over $K_0(\Vect^\bZ)=\bZ[\Qlcl^\times]$. We have
\[
\cM=\bigoplus_O \cM_O, \quad \cM^\bZ=\bigoplus_O \cM_O^\bZ, \quad K_0(\cM^\bZ)=\bigoplus_O K_0(\cM_O^\bZ),
\]
where $O$ runs through the set of $\bZ$-orbits in the set ${\O_\cM}$ of simple objects of $\C$ and $\cM_O\subset\cM$ is the full subcategory consisting of objects of $\cM$, all of whose irreducible components are in $O$. It is easy to see that if $|O|=n$, then $K_0(\cM_O^\bZ)\subset K_0(\cM^\bZ)$ is a $\bZ[\Qlcl^\times]$-submodule, which is noncanonically isomorphic to $\bZ[\Qlcl^\times]/I_n$, where $I_n\subset\bZ[\Qlcl^\times]$ is the ideal generated by elements of the form $[\zeta]-1$, where $\zeta\in \Qlcl^\times$ is such that $\zeta^n=1$. If $n>1$, then $\zeta^n=1$ for some $\zeta\in\Qlcl\setminus\{1\}$, so $K_0(\cM_O^\bZ)\tensor_{\bZ[\Qlcl^\times]}\Qlcl=0$. If $n=1$ i.e. if $O=\{u\}$ for some $u\in \O_\C^\bZ$, then $\dim_{\Qlcl} \left( K_0(\cM_{\{u\}}^\bZ)\tensor_{\bZ[\Qlcl^\times]}\Qlcl\right)=1$. So $A_{\C,a}$ is the direct sum of the lines $L_u:=K_0(\cM^\bZ_{\{u\}})\tensor_{\bZ[\Qlcl^\times]}\Qlcl$, where $u\in\O_\C^\bZ$. This implies statement (i).

Given $u,u'\in \O_{\cM}^\bZ$, the pairing $L_u\times L_{u'}\rar{}\Qlcl$, induced by the bilinear form $(a_1,a_2)\longmapsto\lambda_{\C,a}(a_1a_2)$, is nonzero if and only if $u'$ is dual to $u$. This implies (ii).
\end{proof}

\subsection{The setup of neutrally unipotent groups} 
We now return to our setting of a neutrally unipotent group $G$ equipped with an $\Fq$-structure defined by a Frobenius $F:G\to G$. Recall from \S\ref{frobal} that we have the Frobenius algebra $\FunG$ of class functions on all the pure inner inner forms $G^g_0(\Fq)$ under convolution. 

Now let $e\in \hat{G}^F$ be an $F$-stable minimal idempotent and $e_0\in \DGn$ the corresponding geometrically minimal idempotent. Then we have the modular category $\M_{G,e}$ equipped with the modular autoequivalence $F=(F^{-1})^*:\Meg \to \Meg$. Then using the categorical construction from \S\ref{catcon} we obtain the Frobenius algebra $A_{e_0}:=A_{\Meg,F}=K_0(\M_{G,e}^\f{Z})\otimes_{K_0(\Vec^\Z)}\Qlcl$ equipped with the linear functional $\lambda_{e_0}:=\lambda_{\Meg,F}:A_{e_0}\to \Qlcl$. 

The sheaf-function correspondence (for conjugation-equivariant Weil sheaves) yields an algebra morphism 
\beq
T:A_{e_0}\to T_{e_0}\FunG
\eeq
 by Theorem \ref{Bo2main}. We have the linear functional $\l:\FunG\to \Qlcl$ defined by (\ref{linfun}).

\begin{lem}\label{l:key}
In the setting above, the following diagram commutes:
\[
\xymatrix{
  A_{e_0} \ar[rrr]^-{T} \ar[drr]_{\l_{e_0}} & & & T_{e_0}\FunG \ar[ld]^{q^{{(\dim G + n_e)}} \cdot\l} \\
  & & \Qlcl &
   }
\]
where $n_e\in\bZ$ is as in Theorem \ref{BDmain}.
\end{lem}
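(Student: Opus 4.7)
The plan is to first reformulate the right-hand side as a trace on $\Gamma$-invariants of the stalk of $C$ at $1$ (via a Lefschetz-style orbit sum), then reduce to the Heisenberg case via induction, and finally verify the reduced identity on simple objects in the Heisenberg case using the structure of $\Meg$ as a $\Gamma$-equivariantization of a pointed modular category.

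For the first step, given $(C,\psi)\in\Meg^{\Z}$, the definition of $T_{C,\psi}$ from \S\ref{sfcorr} evaluated at $(1,t)\in R$ yields $T_{C,\psi}(1,t)=\tr(\psi(1)\circ\phi_C(t^{-1},1)|C_1)=\tr(\bar\psi\,\bar\phi(t^{-1})|C_1)$, where $\bar\psi:=\psi(1)$ and $\bar\phi(g):=\phi_C(g,1)$; the action $\bar\phi$ factors through $\Gamma:=\pi_0(G)$ since $G^\circ$ is connected. The $G$-equivariance of $\psi$ at $h=1$ translates to $\bar\psi\circ\bar\phi(F(g))=\bar\phi(g)\circ\bar\psi$, which shows that $\bar\psi$ preserves $C_1^\Gamma$ and that $\bar t\mapsto\tr(\bar\psi\bar\phi(\bar t^{-1})|C_1)$ is an $F$-class function on $\Gamma$. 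Combining the Lang bijection $H^1(F,G)\cong H^1(F,\Gamma)$, the identity $|G_0^t(\Fq)|=q^{\dim G}\cdot|\Gamma^{\h{ad}(\bar t)F}|$, and the orbit-stabilizer relation $\sum_{\langle\bar t\rangle}|\Gamma^{\h{ad}(\bar t)F}|^{-1}=1$ (valid against any $F$-class function), I will obtain
\[
q^{\dim G+n_e}\cdot\lambda(T_{C,\psi})\;=\;\frac{q^{n_e}}{|\Gamma|}\sum_{\bar t\in\Gamma}\tr(\bar\psi\bar\phi(\bar t^{-1})|C_1)\;=\;q^{n_e}\cdot\tr(\bar\psi\,|\,C_1^\Gamma),
\]
so the lemma reduces to the identity $\tr(\psi|\Hom_{\Meg}(e,C))=q^{n_e}\tr(\bar\psi|C_1^\Gamma)$.

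For the reduction to the Heisenberg case, by Theorem \ref{Fstabe} (possibly after passing to a pure inner form) one can write $e=\indg e'$ with $e'=e_{H,\L}\in\D_{G'}(G')$ Heisenberg on the normalizer $G'$ of an $F$-stable admissible pair. Proposition \ref{indmackey} and Theorem \ref{BDmain2} supply an $F$-equivariant equivalence of modular categories $\M_{G',e'}\cong\Meg$ taking $e'$ to $e$, and hence an algebra isomorphism $A_{e_0'}\cong A_{e_0}$ intertwining $\lambda_{e_0'}$ and $\lambda_{e_0}$ (using $\Hom_{\Meg}(e,\indg C')\cong\Hom_{\M_{G',e'}}(e',C')$). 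Lemma \ref{indisom} and Proposition \ref{sfandind} give the parallel isomorphism of Frobenius algebras on the function side, with $\lambda\circ\ind_{G'_0}^{G_0}=\lambda'$ (a direct verification from the definition of $\lambda$ at the identity and the normalization of the induction map). Theorem \ref{BDmain2}(i) and the relation $n_e=\dim G-2d_e$ yield $\dim G+n_e=\dim G'+n_{e'}$, so the scaling factor $q^{\dim G+n_e}$ transports correctly, and the lemma for $e$ reduces to its counterpart for the Heisenberg idempotent $e'$.

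In the Heisenberg case, $n_e=\dim H$ and, by Lemma \ref{equivariantization}, $\Meg=(\tM_{G,e})^\Gamma$ with $\tM_{G,e}=\bigoplus_{Ug\in\Gamma}\tM_{Ug,e}$, where $\tM_{U,e}\cong\M(K_\L,\theta)$ is pointed modular (Theorem \ref{De1main}), with simples $C_k=e\ast\delta_{\tilde k}$ for $k\in K_\L\subset U/H$. Thus $C_0=e$ is supported on $H\ni 1$, $C_k$ for $k\neq 0$ is supported on $\tilde k H\not\ni 1$, and simples of $\tM_{Ug,e}$ for $g\neq 1$ are supported on $Ug\not\ni 1$; hence $e$ is the only simple of $\tM_{G,e}$ with nonzero stalk at $1$, and it is $\Gamma$-fixed with trivial cocycle (since $\L$ is $G$-invariant and $\can_H$ canonical). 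The simples of $\Meg$ lying above $e$ are therefore $(e,\rho)$ for $\rho\in\Irrep(\Gamma)$, with underlying $G$-equivariant sheaf $V_\rho\otimes e$; for these, both $(e,\rho)_1^\Gamma=V_\rho^\Gamma\otimes e_1$ and $\Hom_{\Meg}(e,(e,\rho))=V_\rho^\Gamma$ equal $\Qlcl$ when $\rho$ is trivial and vanish otherwise, while all other simples $C$ of $\Meg$ have both quantities zero. It remains only to check $C=e$: for a scalar Weil structure $\psi$, $\tr(\psi|\Hom(e,e))=\psi$ while $\tr(F|e_1)=t_{e_0}(1)=q^{-\dim H}$ by the normalization $t_{e_0}=q^{-\dim H}t_{\L_0}$ recalled in \S\ref{pfofmain2}, giving $q^{\dim H}\tr(\bar\psi|e_1)=\psi$, as required. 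The most subtle point will be the structural claim that only $e$ among simples of $\tM_{G,e}$ has nonzero stalk at $1$, which relies crucially on the pointed modular description of $\tM_{U,e}$ and the realization of its simples as translates of the idempotent by lifts of elements of $K_\L$.
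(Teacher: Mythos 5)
Your proposal is correct, but it takes a genuinely different route from the paper for the core step. The first step — rewriting $q^{\dim G}\cdot\l(T_{C,\psi})$ as the trace of Frobenius on the $G$-invariants of the stalk $C_1$, via Lang's theorem and the orbit–stabilizer relation — is in substance the same as the paper's Lemma~\ref{l:Bo2} (which the paper simply quotes from \cite[Prop.~4.18]{Bo2}), though you derive it from scratch and phrase it as a $\Gamma$-average. After that the two arguments diverge. The paper establishes, for an \emph{arbitrary} $F$-stable minimal idempotent $e$, the isomorphism
$(C_1)^G[-2n_e](-n_e)\cong\Hom_{\Meg}(e,C)$ in $(D^b\Vec)^{\Z}$
by a short chain of $\RHom$ identities driven by a single structural input: the self-duality $\f{D}^-e\cong e[-2n_e](-n_e)$ from Theorem~\ref{BDmain}(iii). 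That isomorphism immediately accounts for the factor $q^{n_e}$ as a Tate twist, and no reduction to the Heisenberg case is needed. You instead (a) redo the reduction-to-Heisenberg machinery (induction equivalence, compatibility of $\l$ with $\ind$, $\dim G+n_e=\dim G'+n_{e'}$) that the paper deploys for Theorems~\ref{main2}--\ref{main3}, and then (b) verify the reduced trace identity simple-by-simple, using the explicit description of $\tM_{U,e}$ as a pointed modular category, the observation that $e$ is the only simple of $\tM_{G,e}$ with nonzero stalk at $1$, and a final scalar check on $e$ itself with $e_1\cong\Qlcl[2\dim H](\dim H)$. Both approaches are sound; yours is more concrete but carries more overhead (the reduction step, the cocycle-triviality remark for the unit, and the compatibility $\l\circ\ind_{G_0'}^{G_0}=\l'$ are all believable but require some care), whereas the paper's Verdier-duality argument is shorter, applies uniformly to all minimal idempotents, and isolates a cleaner categorical fact that makes the Tate-twist origin of the exponent $n_e$ transparent.
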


The proof of the lemma is given in \S\ref{ss:proof-l:key} below. Let us discuss some corollaries.

\begin{cor}\label{c:1}
The map $T: A_{e_0}\to T_{e_0}\FunG$ is an algebra isomorphism. In particular the Frobenius algebra $A_{e_0}$ is semisimple.
\end{cor}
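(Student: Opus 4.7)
The plan is to prove that $T$ is an algebra isomorphism by combining a dimension count with an injectivity argument based on the compatibility of Frobenius structures, and then to deduce semisimplicity of $A_{e_0}$ from that of $T_{e_0}\FunG$.

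First I would check that both algebras have the same (finite) $\Qlcl$-dimension. By the proposition in \S\ref{catcon}(i) applied to $\C = \Meg$ with $\Z$ acting through $F^*$, we have $\dim A_{e_0} = |\O_{\Meg}^F| = |CS_e(G)^F|$. On the other side, Theorem \ref{Bo2main}(ii) asserts that $\{T_{C,\psi_C}\}_{C \in CS_e(G)^F}$ is an orthonormal basis of $T_{e_0}\FunG$, so $\dim T_{e_0}\FunG = |CS_e(G)^F|$ as well. Thus it suffices to prove that $T$ is injective.

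For injectivity, I would exploit the fact that both sides are Frobenius algebras. The proposition in \S\ref{catcon}(ii) equips $A_{e_0}$ with the non-degenerate bilinear form $\langle a_1, a_2\rangle_A := \lambda_{e_0}(a_1 a_2)$. Meanwhile a short unraveling of the definitions in \S\ref{frobal} shows that the Frobenius form on $\FunG$ satisfies $(f_1, f_2) = \lambda(f_1 \ast f_2)$, where one reorganizes the sum over $G$-orbits in $R$ as a sum over conjugacy classes in each inner form $\Gtq$. Using that $T$ is a ring homomorphism (via Lemma \ref{eqcompwithconv}) together with Lemma \ref{l:key}, we then obtain, for all $a_1, a_2 \in A_{e_0}$,
$$
(T(a_1), T(a_2)) \;=\; \lambda\bigl(T(a_1 a_2)\bigr) \;=\; q^{-(\dim G + n_e)}\,\lambda_{e_0}(a_1 a_2) \;=\; q^{-(\dim G + n_e)}\,\langle a_1, a_2\rangle_A.
$$
Thus $T$ intertwines the two Frobenius forms up to a nonzero scalar, and non-degeneracy of $\langle \cdot,\cdot\rangle_A$ forces $T$ to be injective. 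Combined with the dimension count above, $T$ is an algebra isomorphism.

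For semisimplicity, the algebra $\FunG = \prod_{<t>} \Fun(\Gtq)^{\Gtq}$ is a finite product of centers of finite group algebras $\Qlcl[\Gtq]$, which are semisimple since $\h{char}(\Qlcl) = 0$; hence $\FunG$ is (commutative) semisimple, and so is the subalgebra $T_{e_0}\FunG$ cut out by the central idempotent $T_{e_0}$. Transporting this along the isomorphism $T$ gives semisimplicity of $A_{e_0}$. The only step that demands any care is the identity $(f_1, f_2) = \lambda(f_1 \ast f_2)$ on $\FunG$, which is however purely a bookkeeping computation with the definitions in \S\ref{frobal}; once that is in place, Lemma \ref{l:key} does all the real work.
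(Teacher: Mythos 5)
Your argument is correct, and it reaches the conclusion via a slightly different combination of ingredients than the paper. The injectivity step is exactly the paper's: the commutative diagram of Lemma~\ref{l:key} shows that $T$ intertwines $\lambda_{e_0}(\cdot\,\cdot)$ and $q^{-(\dim G+n_e)}(\cdot,\cdot)$, and nondegeneracy of the former forces injectivity. (Your preliminary computation $(f_1,f_2)=\lambda(f_1\ast f_2)$ is indeed just the orbit--stabilizer bookkeeping you describe and is fine.) Where you diverge is in surjectivity: the paper invokes Lemma~\ref{span}(ii), which says that the trace functions of objects in $f_0\DGn$ span $T_{f_0}\FunG$, whereas you instead use a dimension count, comparing $\dim A_{e_0}=|\O_{\Meg}^F|$ (from the proposition in \S\ref{catcon}) with $\dim T_{e_0}\FunG=|CS_e(G)^F|$ (from the orthonormality statement of Theorem~\ref{Bo2main}(ii)). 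Both are legitimate; the paper's route relies on the slightly weaker spanning lemma, while yours imports the full orthonormality result. In fact, once you invoke Theorem~\ref{Bo2main}(ii) you could shortcut further: a basis of $A_{e_0}$ is $\{[(C,\psi_C)]\}_{C\in CS_e(G)^F}$, and $T$ sends it bijectively onto the orthonormal basis $\{T_{C,\psi_C}\}$, so $T$ is a linear isomorphism with no separate injectivity argument needed; since $T$ is a ring map, it is an algebra isomorphism. The semisimplicity deduction is as you say.
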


\begin{proof}
The injectivity follows from the lemma using the nondegeneracy of the form $(a,b)\longmapsto\lambda_{e_0}(ab)$, where $a,b\in A_{e_0}$. The surjectivity follows from Lemma \ref{span}(ii).
\end{proof}

As a consequence, we have:
\begin{cor}\label{t:main}
The map $T:\prod\limits_{e\in \widehat{G}^F} A_{e_0} \rar{} \FunG$ defined using the sheaf-function correspondence is an isomorphism.
\end{cor}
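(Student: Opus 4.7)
The plan is to deduce the corollary from Corollary \ref{c:1} applied factor by factor, combined with a product decomposition $\FunG \cong \prod_{e\in\widehat{G}^F} T_{e_0}\FunG$ of the target. First I would check that $\widehat{G}^F$ is finite. Since each $\Irrep(G_0^t(\Fq))$ is finite and $H^1(F,G)$ is finite, the set $\Irrep(G_0)$ is finite; by Definition \ref{lpacketchar} and Theorem \ref{Fstabe} the $\f{L}$-packets $\Irrep_{e_0}(G_0)$ for $e\in\widehat{G}^F$ are nonempty and partition $\Irrep(G_0)$, so $\widehat{G}^F$ is finite as well.

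Next I would verify that $\{T_{e_0}\}_{e\in\widehat{G}^F}$ is a family of pairwise orthogonal central idempotents summing to $1$ in $\FunG$. Centrality is automatic from commutativity of $\FunG$. Each $T_{e_0}$ is idempotent since $e_0\ast e_0\cong e_0$ and sheaf-function correspondence is compatible with convolution (Lemma \ref{eqcompwithconv}); pairwise orthogonality follows the same way from the fact that distinct minimal idempotents $e_0,e'_0$ satisfy $e_0\ast e'_0\cong 0$. Completeness $\sum_e T_{e_0}=1$ I would verify by evaluating on the basis $\Irrep(G_0)$ of $\FunG$ provided by Proposition \ref{charofirrep}: each $\chi_W$ lies in a unique packet $\Irrep_{e_0}(G_0)$, and by Definition \ref{lpacketchar} $T_{e_0}\ast\chi_W=\chi_W$ while $T_{e'_0}\ast\chi_W=0$ for $e'\ne e$, so $\bigl(\sum_e T_{e_0}\bigr)\ast\chi_W=\chi_W$ for every $W$.

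This yields an algebra decomposition $\FunG = \bigoplus_{e\in\widehat{G}^F} T_{e_0}\FunG = \prod_{e\in\widehat{G}^F} T_{e_0}\FunG$, the finite direct sum and finite product agreeing. The map $T\colon\prod_{e} A_{e_0}\to\FunG$ of the statement is by construction the product of the componentwise sheaf-function maps $T\colon A_{e_0}\to T_{e_0}\FunG$, each of which is an algebra isomorphism by Corollary \ref{c:1}. The result follows. I do not anticipate any serious obstacle: all of the content has been prepared — the componentwise isomorphism by Corollary \ref{c:1}, and the decomposition of $\FunG$ into $\f{L}$-packet blocks by the orthogonality results of Boyarchenko recalled in Theorem \ref{Bo2main} and Definition \ref{lpacketchar}.
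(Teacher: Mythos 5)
Your proof is correct and follows the route the paper leaves implicit: Corollary \ref{c:1} gives the isomorphism block by block, and the orthogonal decomposition $\FunG=\bigoplus_{e\in\widehat{G}^F}T_{e_0}\FunG$ (via pairwise orthogonal central idempotents $T_{e_0}$ summing to $1$, as guaranteed by Theorem \ref{Bo2main} and Definition \ref{lpacketchar}) assembles these into the global statement. The paper records this only as ``As a consequence,'' and your argument supplies exactly the details needed.
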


\brk
The algebra $A_{e_0}$ has a natural basis (well defined up to rescaling) parametrized by the set of $F$-stable character sheaves in the $\f{L}$-packet $CS_e(G)$ and the algebra $\prod\limits_{e\in \widehat{G}^F} A_{e_0}$ has a basis parametrized by the set $CS(G)^F$. On the other hand, the algebra $\FunG$ has a basis formed by its set of minimal idempotents (which is in a 1-1 correspondence with the set $\Irrep(G_0)$ of irreducible representations of all the pure inner forms). The relationship between these two bases is described by the crossed $S$-matrices studied in this paper.
\erk
Let us now derive some results about dimensions of irreducible representations (cf. Thm. \ref{t:corthm}).
\bcor\label{c:sqdim}
Let $W\in \Irrep_{e_0}(G^t_0(\Fq))\subset \Irrep_{e_0}(G_0)$. Then $e_W:=\frac{\dim(W)}{|\Gtq|}\cdot \chi_W$ is a minimal idempotent in $T_{e_0}\FunG$. Let $\epsilon_W$ be the corresponding minimal idempotent in $A_{e_0}$.\\ 
(i) Then $\l_{e_0}(\epsilon_W)=q^{\dim G+n_e}\cdot\frac{\dim(W)^2}{|\Gtq|^2}$. \\
(ii) Hence, we obtain 
\beq
\sum\limits_{W\in \Irrep_{e_0}(G_0)}\frac{\dim(W)^2}{|\Gtq|^2}=\frac{1}{q^{\dim G+n_e}}.
\eeq 
(iii) Furthermore, if $G$ is connected then we have 
\beq
\sum\limits_{W\in \Irrep_{e_0}(G_0(\Fq))}\dim(W)^2=q^{2d_e}.
\eeq
\ecor 
\bpf
Statement (i) follows directly from the definition (\ref{linfun}) of $\l$ and Lemma \ref{l:key}. To prove (ii), we observe that the set of all minimal idempotents in $A_{e_0}$ is equal to $\{\epsilon_W|W\in \Irrep_{e_0}(G_0)\}$. Hence we must have $\sum\limits_{W\in \Irrep_{e_0}(G_0)}{\epsilon_W}=1$ and by definition $\l_{e_0}(1)=1$. Statement (iii) immediately follows from (ii). 
\epf

\subsection{Proof of Lemma \ref{l:key}}\label{ss:proof-l:key}
The following is an immediate consequence of \cite[Prop. 4.18]{Bo2}:
\blem\label{l:Bo2}
Talking the stalk at $1\in G$ (i.e. $C\mapsto C_1$) defines functors 
\beq\DG\to \D_G(1),\eeq
\beq\D_{G_0}^{Weil}(G_0)\to \D_{G_0}^{Weil}(1)\eeq
and taking $G$-invariants (i.e. $V\mapsto V^G$) defines functors
\beq D^b\Rep(G)\cong D^b\Rep(\Pi_0(G))\cong \D_G(1)\to D^b\Vec,\eeq
\beq\D_{G_0}^{Weil}(1)\to (D^b\Vec)^{\f{Z}}.\eeq
Let $C\in \D_{G_0}^{Weil}(G_0)$ and let $T_C\in \FunG$ denote the associated trace of Frobenius function. Then the number $q^{\dim G}\cdot\l(T_C)$ is equal to the trace of the Frobenius automorphism associated with the object $(C_1)^G\in (D^b\Vec)^{\f{Z}}$.
\elem

Using Definition \ref{d:linfun} we obtain:
\blem\label{l:aux}
Let $C\in \Meg^{\f{Z}}\subset \D_{G_0}^{Weil}(G_0)$. Then $\l_{e_0}(C)$ is equal to the trace of the Frobenius automorphism associated with the object $\Hom_{\Meg}(e,C)\in \Vec^{\f{Z}}$. 
\elem

To complete the proof of Lemma \ref{l:key} we will prove that  $(C_1)^G[-2n_e](-n_e)$ and $\Hom_{\Meg}(e,C)$ are isomorphic as objects of $(D^b\Vec)^{\f{Z}}$ for each $C\in \Meg^\Z$.

Recall that we have the dualizing functor $\f{D}^-:\DG\to\DG$ such that we have natural identifications
\beq
\RHom_{\DG}(C\ast D,\delta_1)\cong \RHom_{\DG}(C,\f{D}^-D) \mbox{ for $C,D\in \DG$.}
\eeq 
Also note that for $C\in \DG$ we have the adjunction
\beq
\RHom_{\DG}(C,\delta_1)\cong \RHom_{\D_G(1)}(C_1,\Qlcl).
\eeq

Now let $C$ be any object of $\Meg^\Z$. Using the observations above along with the semisimplicity of the category $\Meg$ and Theorem \ref{BDmain} we obtain a sequence of isomorphisms (as objects of $(D^b\Vec)^{\f{Z}}$):
\beq\Hom_{\Meg}(e,C)^*\cong\Hom_{\Meg}(C,e)\eeq
\beq\cong\RHom_{D^b\Meg}(C,e)\eeq
\beq\cong\RHom_{\DG}(C,e)\eeq
\beq\label{e:dual}\cong\RHom_{\DG}(C,\f{D}^-e[2n_e](n_e))\eeq
\beq\cong\RHom_{\DG}(C,\f{D}^-e)[2n_e](n_e)\eeq
\beq\cong\RHom_{\DG}(C\ast e,\delta_1)[2n_e](n_e)\eeq
\beq\cong\RHom_{\DG}(C,\delta_1)[2n_e](n_e)\eeq
\beq\cong\RHom_{\D_G(1)}(C_1,\Qlcl)[2n_e](n_e)\eeq
\beq\cong\RHom_{D^b\Rep(G)}(C_1,\Qlcl)[2n_e](n_e)\eeq
\beq\cong\RHom_{\Vec}((C_1)^G,\Qlcl)[2n_e](n_e)\eeq
\beq\cong\left((C_1)^G\right)^*[2n_e](n_e).\eeq

Taking duals we conclude that  $C_1^G[-2n_e](-n_e)\cong \Hom_{\Meg}(e,C)$ in $(D^b\Vec)^{\f{Z}}$. Now taking the associated traces of Frobenius, and using Lemmas \ref{l:Bo2} and \ref{l:aux} we conclude that $q^{n_e}\cdot q^{\dim G}\cdot \l(T_C)=\l_{e_0}(C)$. The factor $q^{n_e}$ here appears due to the Tate twist $(-n_e)$. The completes the proof of Lemma \ref{l:key}.

\end{document}